\newcounter{samcomments}
\newtheorem{thm}{Theorem}[section]
\newtheorem{lemma}[thm]{Lemma}
\theoremstyle{definition}
\newtheorem{example}[thm]{Example}
\theoremstyle{plain}
    \newtheoremstyle{TheoremNum}
        {\topsep}{\topsep} %%% space between body and thm
        {\itshape} %%% Thm body font
        {-0.25cm} %%% Indent amount (empty = no indent)
        {\bfseries} %%% Thm head font
        {.} %%% Punctuation after thm head
        { }  %%% Space after thm head
        {\thmname{#1}\thmnote{ \bfseries #3}}%%% Thm head spec
    \theoremstyle{TheoremNum}
\newcommand*{\claimproofname}{My proof}
\DeclareMathOperator{\Ker}{\mathrm{Ker}}
\DeclareMathOperator{\coker}{\mathrm{Coker}}
\DeclareMathOperator{\Tor}{\mathrm{Tor}}
\newcommand{\GL}{\mathrm{GL}}
\newcommand{\SL}{\mathrm{SL}}
\newcommand{\SO}{\mathrm{SO}}
\DeclareMathOperator{\id}{id}
\newcommand{\Stn}{\mathrm{St}_N}
\def\Z{\mathbb{Z}}
\newcommand{\ZZ}{\mathbb{Z}}
\newcommand{\RR}{\mathbb{R}}
\newcommand{\QQ}{\mathbb{Q}}
\tikzstyle{blackNode}=[fill=black, draw=black, shape=circle]
\newcommand{\group}{G} %G or \Gamma seem the reasonable choices
\newcommand{\rankSL}{N}
\newcommand{\homdeg}{n}
\newcommand{\Stab}{\operatorname{Stab}}
\newcommand{\Stabcoset}{\group}
\newcommand{\orbitrep}{\mathcal O}
\newcommand{\ls}{\left\{}
\newcommand{\rs}{\right\}}
\newcommand{\ms}{\, \middle| \,}
\title[Unitary cohomology of $\SL_{\rankSL}(\Z)$]{Non-vanishing unitary cohomology of low-rank integral special linear groups}
\newcommand\cycle[2][\,]{%
	\readlist\thecycle{#2}%
	(\foreachitem\i\in\thecycle{\ifnum\icnt=1\else#1\fi\i})%
}
\newcounter{dawidcomments}
\newcommand{\dawid}[1]{\textbf{\color{red}(D\arabic{dawidcomments})}
\marginpar{\scriptsize\raggedright\textbf{\color{red}(D\arabic{dawidcomments})Dawid: }#1}
\addtocounter{dawidcomments}{1}}
\newcounter{benjamincomments}
\newcommand{\benjamin}[1]{\textbf{\color{red}(B\arabic{benjamincomments})}
\marginpar{\scriptsize\raggedright\textbf{\color{red}(B\arabic{benjamincomments})Benjamin: }#1}
\addtocounter{benjamincomments}{1}}
\newcounter{piotrcomments}
\author{Benjamin Br\"uck}
\author{Sam Hughes}
\author{Dawid Kielak}
\author{Piotr Mizerka}
\address[B.~Br\"uck]{Institut f{\"u}r Mathematische Logik und Grundlagenforschung, Einsteinstr. 62, 48149 M{\=u}nster, Germany}
\address[S.~Hughes]{Universit\"at Bonn, Mathematical Institute, Endenicher Allee 60, 53115 Bonn, Germany}
\address[D.~Kielak]{Mathematical Institute, Andrew Wiles Building, Observatory Quarter, University of Oxford, Oxford OX2 6GG, UK}
\address[P.~Mizerka]{Insitute of Mathematics, Polish Academy of Sciences, Śniadeckich 8, 00-656 Warszawa, Poland}
\email{benjamin.brueck@uni-muenster.de}
\email{sam.hughes.maths@gmail.com}
\email{kielak@maths.ox.ac.uk}
\email{pmizerka@impan.pl}
\date{\today}
\subjclass[2020]{11F75, 20J06, 55-08}
\begin{document}
	\sloppy
\maketitle

\begin{abstract}
We construct explicit finite-dimensional orthogonal representations $\pi_\rankSL$ of $\SL_{\rankSL}(\Z)$ for ${\rankSL} \in \{3,4\}$  all of whose invariant vectors are trivial, and such that $H^{\rankSL - 1}(\SL_{\rankSL}(\Z),\pi_\rankSL)$ is non-trivial. This implies that for $\rankSL$ as above, the group $\SL_{\rankSL}(\Z)$ does not have property $(T_{\rankSL-1})$ of Bader--Sauer and therefore is not $(\rankSL-1)$-Kazhdan in the sense of De Chiffre--Glebsky--Lubotzky--Thom, both being higher versions of Kazhdan's property $T$.
\end{abstract}

\section{Introduction}

One of the remarkable qualities of Kazhdan's property $T$ is that it admits a plethora of equivalent formulations. In particular, the celebrated Delorme--Guichardet Theorem \cite{Guichardet1972,Delorme1977} tells us that a finitely generated group has property $T$ if and only if its first reduced cohomology with coefficients in any unitary representation is zero.

 This cohomological viewpoint  invites natural extensions, where one looks at the vanishing of higher reduced cohomologies. Bader--Sauer \cite{BaderSauer2023} introduced two such generalisations: the weaker property $(T_n)$, that requires the $n$th reduced cohomology to vanish when the coefficients come from a unitary representation all of whose invariant vectors are trivial, and the stronger property $[T_n]$, where the vanishing should happen for all unitary representations; the latter property is equivalent to being $n$-Kazhdan, as introduced by De Chiffre--Glebsky--Lubotzky--Thom \cite{DeChiffreetal2020}. 
 
 Kazhdan introduced property $T$ to study lattices in semi-simple Lie groups of higher rank \cite{Kazhdan1967}. The prime example of such a lattice is $\SL_{\rankSL}(\Z)$ for $\rankSL \geqslant 3$, and it is precisely these groups that we will investigate.
 
 \begin{thm}
 \label{thm:main_intro}
 	For every $\rankSL \in \{3,4\}$, there exists a finite-dimensional orthogonal representation $\pi_\rankSL$ of $\SL_{\rankSL}(\Z)$  all of whose invariant vectors are trivial such that
 \[
 H^{\rankSL -1}(\SL_{\rankSL}(\Z); \pi_\rankSL) \neq 0.
 \] 
 Since $\pi_\rankSL$ is finite-dimensional, the reduced and non-reduced cohomologies are equal, and therefore $\SL_{\rankSL}(\Z)$ does not have property $(T_{\rankSL -1})$.
 \end{thm}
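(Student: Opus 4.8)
\begin{claimproof}[Proof strategy]
The plan is to turn the non-vanishing statement into an exact, finite-dimensional linear-algebra computation and to carry it out for an explicitly chosen $\pi_\rankSL$.

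\emph{Step 1: a finite cochain complex computing $H^{\ast}(\SL_\rankSL(\Z);\pi_\rankSL)$.}
First I would fix $\rankSL\in\{3,4\}$ and replace $\SL_\rankSL(\Z)$ by its action on an explicit finite contractible $\SL_\rankSL(\Z)$-CW complex $X$ of dimension $\vcd(\SL_\rankSL(\Z))=\binom{\rankSL}{2}$ on which the action is cocompact --- concretely a cellulation of Ash's well-rounded retract, equivalently the Voronoi cell complex of perfect quadratic forms, for which the cell orbits, the stabilisers $\Stab(\sigma)$ as finite subgroups of $\SL_\rankSL(\Z)$, and the incidence numbers are tabulated in the literature for $\rankSL\in\{3,4\}$ (Soulé for $\rankSL=3$; Elbaz-Vincent--Gangl--Soulé for $\rankSL=4$). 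Since $\pi_\rankSL$ is a representation on a real --- hence $\QQ$- --- vector space, every finite stabiliser $F$ has $H^{i}(F;\pi_\rankSL)=0$ for $i>0$, so the equivariant cohomology spectral sequence of the contractible complex $X$ degenerates onto its bottom row and the cellular cochain complex of $X$ with $\pi_\rankSL$-coefficients computes the group cohomology:
\[
  H^{\ast}\bigl(\SL_\rankSL(\Z);\pi_\rankSL\bigr)\;\cong\;H^{\ast}\bigl(C^{\bullet}\bigr),\qquad
  C^{p}\;=\;\bigoplus_{\sigma\in\orbitrep_{p}}\bigl(\pi_\rankSL\otimes_{\ZZ}\ZZ_{w_{\sigma}}\bigr)^{\Stab(\sigma)} ,
\]
where $\orbitrep_{p}$ is a set of representatives for the $\SL_\rankSL(\Z)$-orbits of $p$-cells, $\ZZ_{w_{\sigma}}$ is the orientation module of $\sigma$ (i.e.\ $\ZZ$ with $\Stab(\sigma)$ acting through the sign character recording which elements reverse the orientation of $\sigma$), each summand is the image of the averaging idempotent of $\Stab(\sigma)$, and the coboundaries are the signed incidence maps. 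All of this is finite-dimensional over $\QQ$ and amenable to exact machine computation.

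\emph{Step 2: choosing $\pi_\rankSL$.}
Next I would take $\pi_\rankSL$ to be a real irreducible representation $V$ of a finite congruence quotient $\SL_\rankSL(\Z/m)$, inflated along $\SL_\rankSL(\Z)\onto\SL_\rankSL(\Z/m)$. This is automatically finite-dimensional and orthogonal (average any inner product over the finite quotient), and provided $V$ is nontrivial we have $V^{\SL_\rankSL(\Z/m)}=0$, hence --- the inflation being along a surjection --- $\pi_\rankSL^{\SL_\rankSL(\Z)}=0$, as required. It then remains to find $m$ and $V$ with $H^{\rankSL-1}(\SL_\rankSL(\Z);V)\neq 0$, and here Shapiro's lemma is the guide: writing $\Gamma(m)\trianglelefteq\SL_\rankSL(\Z)$ for the principal congruence subgroup, $H^{\ast}(\SL_\rankSL(\Z);\RR[\SL_\rankSL(\Z/m)])\cong H^{\ast}(\Gamma(m);\RR)$, and decomposing $\RR[\SL_\rankSL(\Z/m)]=\bigoplus_{V}V^{\oplus m_{V}}$ gives $\dim_{\RR}H^{\rankSL-1}(\Gamma(m);\RR)=\sum_{V}m_{V}\dim_{\RR}H^{\rankSL-1}(\SL_\rankSL(\Z);V)$. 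As $\dim_{\RR}H^{\rankSL-1}(\SL_\rankSL(\Z);\RR)$ is known from the computations cited in Step 1, any level $m$ at which the degree-$(\rankSL-1)$ cohomology of $\Gamma(m)$ exceeds it forces some nontrivial $V$ to contribute --- and such levels exist: this is the point at which the cuspidal and Eisenstein cohomology of congruence subgroups of $\SL_3$ and $\SL_4$, i.e.\ the cohomology of modular forms, enters. In practice one would locate a working triple $(\rankSL,m,V)$ simply by running the computation of Step 1 over a list of small $(m,V)$ and recording those for which $H^{\rankSL-1}\neq 0$.

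\emph{Step 3: execution and the main obstacle.}
Finally, with $(\rankSL,m,V)$ fixed, I would assemble $C^{\bullet}$ from the tabulated cell data: for each orbit representative $\sigma$, have $\Stab(\sigma)$ act on $V\otimes_{\ZZ}\ZZ_{w_{\sigma}}$ (reduce the stabiliser matrices mod $m$, apply $V$, twist by the orientation character) and take invariants, and build $d^{\rankSL-2}$ and $d^{\rankSL-1}$ likewise; then $\dim_{\QQ}H^{\rankSL-1}=\dim C^{\rankSL-1}-\rank d^{\rankSL-2}-\rank d^{\rankSL-1}$, and it suffices to certify that this is positive --- equivalently, to exhibit an explicit cocycle in $C^{\rankSL-1}$ together with a witness (for instance a homology class it pairs nontrivially with) that it is not a coboundary. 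The ranks would be computed with exact arithmetic over $\QQ$, or over a finite field after confirming it is a good prime for the complex. As noted in the statement, finite-dimensionality of $\pi_\rankSL$ makes reduced and ordinary cohomology agree, so this non-vanishing is precisely the failure of property $(T_{\rankSL-1})$. The step I expect to be the main obstacle is the execution for $\rankSL=4$: the target degree $3$ sits in the middle of the $6$-dimensional complex, the stabilisers are genuine finite matrix groups, and $\SL_4(\Z/m)$ together with its irreducibles can be large, so the differentials are sizeable matrices; the delicate bookkeeping --- the orientation characters $w_{\sigma}$, matching faces to orbit representatives under the $\SL_4(\Z)$-action, the real-versus-complex structure of $V$ --- must be handled correctly, and the rank computation must be reproducible and, ideally, independently verified. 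The $\rankSL=3$ case, where the complex is small, serves as a consistency check on the whole pipeline.
\end{claimproof}
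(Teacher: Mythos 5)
Your proposal is correct and follows essentially the same route as the paper: a Voronoi-type cocompact complex with finite stabilisers, an orthogonal representation inflated from a finite congruence quotient with no invariant vectors, and a rank computation carried out in exact arithmetic. The remaining discrepancies are implementation choices rather than ideas. Where you work directly with the cochain complex of the well-rounded retract, the paper uses the relative chain complex of the pair $(X_\rankSL^*,\partial X_\rankSL^*)$ together with a spectral sequence and Borel--Serre duality; these are dual descriptions that produce the same matrices after a degree shift ($H^{\rankSL-1}(\group;M)\cong H^\group_{\rankSL(\rankSL-1)/2}(X_\rankSL^*,\partial X_\rankSL^*;M)$), so the choice affects only bookkeeping. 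Where you propose scanning real irreducibles of $\SL_\rankSL(\Z/m)$ guided by Shapiro's lemma, the paper short-circuits the search with one explicit, relatively small representation per $\rankSL$: for $\rankSL=3$, the sign character of the index-two subgroup $H\leq H_3\cong S_3\times S_3\leq\SL_3(\FF_3)$, inflated and then induced up to $\SL_3(\FF_3)$ (so $\dim\pi_3=156$); for $\rankSL=4$, a three-dimensional representation of the $S_3$-quotient of an order-$576$ subgroup $H_4\leq\SL_4(\FF_2)$, again inflated and induced (so $\dim\pi_4=105$). These representations need not be irreducible, but that is irrelevant; all that matters is the absence of invariant vectors, which induction preserves by Frobenius reciprocity. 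Finally, where you propose comparing ranks of coboundaries or exhibiting a cocycle with a pairing witness, the paper computes the corank of a combinatorial Laplacian $\Delta'_\homdeg$ (extended by the identity on a complementary summand to make it a map of free $\QQ\group$-modules), which yields the Betti number as a single kernel computation and is particularly convenient for exact machine arithmetic. Your plan is sound, and the residual work is exactly the explicit execution you flag as the main obstacle in Step 3.
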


We remark that work by Monod \cite[Corollary 1.6]{Monod2010} implies that the non-trivial cohomology classes we obtain are unbounded.

\cref{thm:main_intro} should be compared with \cite[Theorem A]{BaderSauer2023}, stating that for all $\rankSL \geqslant 3$, the groups $\SL_{\rankSL}(\Z)$ have property $(T_{\rankSL -2})$. The Bader--Sauer theorem is an example of the phenomenon of cohomology vanishing below the rank (which is $\rankSL -1$ in this case); our result shows that at the rank, such vanishing no longer takes place.

For $\rankSL = 2$, since $\SL_{2}(\Z)$ has a finite-index subgroup with infinite abelianisation, one easily constructs unitary representations with all fixed vectors trivial that admit non-trivial harmonic cocycles, and hence the first cohomology of $\SL_{2}(\Z)$ with coefficients in such a representation is non-trivial. Hence, $\SL_{2}(\Z)$ does not have property $(T_1)$.

There is an easier way of establishing that $\SL_{3}(\Z)$ does not have the stronger property $[T_3]$, shown to the authors by Roman Sauer: $\SL_{3}(\Z)$ admits  a finite-index torsion-free subgroup with cohomological dimension three and Euler characteristic zero. Since the zeroth cohomology of a non-trivial group with trivial coefficients $\QQ$ is non-zero, there must be some non-trivial cohomology in odd dimensions. There is none in dimension one, since $\SL_{3}(\Z)$ has property $T$, and thus its finite-index subgroups have finite abelianisations. The subgroup must therefore have non-vanishing third cohomology with coefficients in $\QQ$, which by Shapiro's lemma gives us non-vanishing third cohomology for $\SL_{3}(\Z)$ with coefficients in a finite-dimensional unitary representation.  This representation does have non-trivial invariant vectors.
 
\medskip

The general strategy that we will follow consists of three steps. First, we explicitly construct a chain complex for the symmetric space of $\SL_{\rankSL}(\RR)$ relative to its Borel--Serre boundary  using Voronoi cells -- here we are following an established technique, see \cite{Soule2000,ElbazVincent2013}. Then we construct an explicit finite-dimensional representation of $\SL_{\rankSL}(\Z)$  all of whose invariant vectors are trivial. Finally, we tensor the chain complex with the representation, and obtain non-trivial homology classes of the tensored complex using a computer. Through an argument using a spectral sequence and Borel--Serre duality, we obtain non-trivial cohomology classes for $\SL_{\rankSL}(\Z)$ with coefficients in the chosen representation.

Notebooks containing the computations can be found in a Zenodo repository \cite{HigherTSLN2024}. In particular, they contain an implementation of the Voronoi tessellation in the Julia language.

\section{Computing cohomology of special linear groups}

The central aim of the article is to compute cohomology groups of
the special linear group $\SL_{\rankSL}(\Z)$. We will do this by relating the cohomology of $\SL_{\rankSL}(\Z)$ to the homology of the pair $(X_{\rankSL}^\ast,\partial X_{\rankSL}^\ast)$, where $X_\rankSL$ denotes the symmetric space associated to $\SL_\rankSL(\RR)$, the space $X_{\rankSL}^\ast\supset X_\rankSL$ is a certain bordification, and $\partial X_{\rankSL}^\ast = X_{\rankSL}^\ast \setminus X_{\rankSL}$ is the boundary of this bordification.

\subsection{The bordification $X_\rankSL^*$}
\label{sec_bordification}
We start by defining $X_\rankSL^*$.
The set of all symmetric $\rankSL\times \rankSL$-matrices over $\RR$ forms an $(\rankSL(\rankSL+1)/2)$-dimensional subspace of $\RR^{\rankSL \times \rankSL}$. We identify it with the subspace of quadratic forms on $\RR^\rankSL$.
Inside this subspace, the set of all positive definite forms is a cone that we denote by $K_\rankSL$. We write $K_\rankSL^*$ for its rational closure, i.e.~the set of all positive semidefinite forms whose kernel is spanned by vectors in $\QQ^\rankSL$. The set $K_\rankSL^*$ forms a cone as well and we have $K_\rankSL\subset K_\rankSL^*$.

Define  $X_{\rankSL}^*$ as the quotient of $K_\rankSL^*$ by homotheties and let $\pi\colon K_\rankSL^* \to X_{\rankSL}^*$ be the projection map, i.e.~$\pi(q) = \pi(q')$ if and only if $q = \lambda\cdot q'$ for some $\lambda\in \RR_{>0}$.
We identify the symmetric space $X_\rankSL$ associated to $\SL_{\rankSL}(\RR)$ with $\pi(K_\rankSL)$. (The isomorphism of $X_\rankSL$  with the coset description of the symmetric space as $\SO(\rankSL) \backslash \SL_\rankSL(\RR)$ is given by $\SO(\rankSL) g \mapsto \pi(g^t g)$.)
We write $\partial X_{\rankSL}^* = X_{\rankSL}^* \setminus X_{\rankSL}$.

The group $\SL_\rankSL(\ZZ)$ acts on $K_{\rankSL}^*$ from the right by 
\begin{equation*}
	q . g = g^t q g, \text{ for } g\in \SL_\rankSL(\ZZ) \text{ and } q\in K_{\rankSL}^*,
\end{equation*}
where we see both $g$ and $q$ as represented by matrices in $\RR^{\rankSL\times \rankSL}$.
This induces an action of $\SL_\rankSL(\ZZ)$ on $X_{\rankSL}^*$ that extends the usual action on $X_{\rankSL}$ and in particular preserves $\partial X_{\rankSL}^*$.

\subsection{Relation to the cohomology of $\SL_\rankSL(\ZZ)$}

Write $G = \SL_{\rankSL}(\Z)$ and let $M$ be a $\QQ G$-module which is finite dimensional as a $\QQ$-module.
By \cite[Proposition 1]{Soule2000},
\[H_p(X_{\rankSL}^\ast,\partial X_{\rankSL}^\ast) = \left\{ \begin{tabular}{cl}
	$\Stn$    & if $p={\rankSL}-1$; \\
	$0$       & \text{otherwise};
\end{tabular}\right.  \]
as $\group$-modules, where $\Stn$ is the \emph{Steinberg module} associated to $\SL_{\rankSL}(\QQ)$, i.e.~the degree-$(\rankSL-2)$ reduced homology of the Tits building associated to $\SL_{\rankSL}(\QQ)$.  In particular,
\begin{equation}\label{Stn tensor M}
	H_{p}(X_{\rankSL}^\ast,\partial X_{\rankSL}^\ast)\otimes M = \left\{ \begin{tabular}{cc}
		$\Stn \otimes M$    & if $p={\rankSL}-1$; \\
		$0$       & \text{otherwise};
	\end{tabular}\right.
\end{equation}
as $\group$-modules, where tensoring takes place over $\Z$, and $G$ acts diagonally.

We want to compare the $\group$-modules $H_p(X_{\rankSL}^\ast,\partial X_{\rankSL}^\ast) \otimes M$ and $H^\group_p(X_{\rankSL}^\ast,\partial X_{\rankSL}^\ast;M)$. To this end, let us prove the following lemma, which holds for any group $\group$.

\begin{lemma}\label{lem:bilinear_equivariant}
	Let $(X,Y)$ be a pair of $\group$-CW complexes.
	If $M$ is a $G$-module that is torsion free as a $\Z$-module, then the natural bilinear map $H_p(X,Y;M)\to H_p(X,Y)\otimes M$ is an isomorphism of $G$-modules.
\end{lemma}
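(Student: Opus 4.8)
The statement is essentially a naturality-of-the-universal-coefficient-theorem claim, upgraded to keep track of the $G$-action. The plan is to build everything at chain level using the cellular chain complex of the pair $(X,Y)$, which is a complex of free $\Z$-modules, and observe that the $G$-action makes it a complex of $\Z G$-modules (though not of free $\Z G$-modules in general, which is fine — we only need freeness over $\Z$).

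**Main steps.**

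First I would recall the algebraic universal coefficient theorem: if $C_\bullet$ is a chain complex of free (or just flat) $\Z$-modules and $M$ is any $\Z$-module, there is a natural short exact sequence
\[
0 \to H_p(C_\bullet)\otimes M \to H_p(C_\bullet \otimes M) \to \Tor_1^\Z(H_{p-1}(C_\bullet), M) \to 0.
\]
Apply this with $C_\bullet = C_\bullet^{\mathrm{cell}}(X,Y)$, the cellular chain complex of the pair, so that $H_p(C_\bullet) = H_p(X,Y)$ and $H_p(C_\bullet\otimes M) = H_p(X,Y;M)$ by definition of homology with coefficients. Since $M$ is torsion free as a $\Z$-module, it is flat, so $\Tor_1^\Z(H_{p-1}(X,Y),M) = 0$ and the map $H_p(X,Y;M) \to H_p(X,Y)\otimes M$ is an isomorphism of abelian groups. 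The only remaining point is equivariance: the $G$-action on $X$ preserving $Y$ induces a $\Z G$-module structure on $C_\bullet^{\mathrm{cell}}(X,Y)$ (permuting cells, up to sign), hence on its homology, and the universal coefficient map is induced by the chain-level map $C_\bullet \otimes M \to H_\bullet(C_\bullet)\otimes M$ which is manifestly $G$-equivariant when $G$ acts diagonally on the tensor products. Therefore the isomorphism is one of $G$-modules.

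**The main obstacle.**

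There is essentially no hard mathematical content here; the only thing requiring care is bookkeeping of the $G$-action at the chain level — in particular checking that the universal coefficient map, or equivalently the evident surjection from cycles, is $G$-equivariant with respect to the diagonal action, and that the splitting used in the classical UCT need not be equivariant (but we do not need it to be, since $\Tor_1$ vanishes). I would also note the mild subtlety that for a general $G$-CW pair the cellular chains are free over $\Z$ but the cells need only be permuted projectively; writing $C_\bullet$ as the relative cellular chains $C_\bullet(X)/C_\bullet(Y)$ makes the $\Z G$-module structure transparent. One should state clearly which flatness hypothesis is being used: torsion-freeness of $M$ over $\Z$ is exactly flatness over the PID $\Z$, which is what kills the $\Tor$ term.
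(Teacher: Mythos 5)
Your proposal is correct and follows essentially the same route as the paper: invoke the universal coefficient sequence, kill the $\Tor$ term using that $M$ is torsion-free (hence flat) over $\Z$, and observe that the comparison map is $G$-equivariant since it is defined at the level of cycles with the diagonal $G$-action. The paper simply spells out the equivariance more carefully by re-running the standard UCT proof (the splitting $0\to Z_\bullet\to C_\bullet\to B_{\bullet-1}\to 0$, tensoring, and the resulting long exact sequence) while checking that each map is a $G$-map, rather than citing the UCT as a black box and asserting equivariance of the resulting isomorphism.
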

\begin{proof}
	We proceed by examining a standard proof for the Universal Coefficient Theorem and checking that each of the maps involved is in fact a $\group$-map.  See for example \cite[Chapter~3.A]{Hatcher2002} for a detailed proof of the Universal Coefficient Theorem.
	
	Let $C_\bullet$ denote the chain complex of the pair $(X,Y)$ -- it is a chain complex of $\Z \group$-modules, and all the modules are free as $\Z$-modules. The complex admits subcomplexes $Z_\bullet$ and $B_{\bullet}$ (with trivial differentials) of $\group$-modules consisting of cycles and boundaries, respectively.
	The chain complex $C_\bullet$ is an extension of $B_{\bullet-1}$ by $Z_\bullet$; this extension respects the $\group$-module structure, and it is split as an extension of chain complexes of $\Z$-modules.
	Tensoring these chain complexes with $M$ over $\Z$ (with diagonal $\group$-action) we obtain a short exact sequence of chain complexes of $\group$-modules
	\[\begin{tikzcd}
		0 \arrow[r] & Z_\bullet \otimes M \arrow[r] & C_\bullet \otimes M \arrow[r] & B_{\bullet-1} \otimes M \arrow[r] & 0
	\end{tikzcd}\]
(exact since the short exact sequence before tensoring was split).
	It gives a long exact homology sequence of $\group$-modules
	\[\begin{tikzcd}
		\cdots \arrow[r] & B_n\otimes M \arrow[r,"i_n\otimes \id"] & Z_n\otimes M \arrow[r] & H_n(C_\bullet;M) \arrow[r] & \cdots
	\end{tikzcd}\]
	that breaks up into short exact sequences of $\group$-modules
	\[\begin{tikzcd}
		0 \arrow[r] & \coker(i_n\otimes\id) \arrow[r] & H_n(C_\bullet;M) \arrow[r] & \ker(i_{n-1}\otimes\id) \arrow[r] & 0.
	\end{tikzcd} \]
	Now, $\coker(i_n\otimes\id)=H_n(C_\bullet)\otimes_\Z M$ by right-exactness of the tensor product.  The group $\ker(i_{n-1}\otimes\id)$ is, by definition (see e.g. \cite[Chapter 3.A]{Hatcher2002}), exactly $\Tor_1^\Z(H_{n-1}(C_\bullet),M)$. The $\Tor$-group vanishes because $M$ is $\Z$-torsion-free by assumption.
\end{proof}

Returning to the case $G = \SL_{\rankSL}(\Z)$, we conclude that 
\begin{equation}
	\label{UCT}
	H_p(X_{\rankSL}^\ast,\partial X_{\rankSL}^\ast) \otimes M \cong H_p(X_{\rankSL}^\ast,\partial X_{\rankSL}^\ast;M)
\end{equation} as $\group$-modules.

\iffalse
By \eqref{eqn SS1} there is a spectral sequence
\[E^1_{p,q}=\bigoplus_{\sigma\in\orbitrep_p} H^q(\group_\sigma;M_\sigma)\Rightarrow H^\group_{p+q}(X_{\rankSL}^\ast,\partial X_{\rankSL}^\ast;M). \]
\dawid{how is this used? Or is it used below?}
\benjamin{I think this is a question for Sam.}
\fi

There is a spectral sequence, see \cite[VII (7.2)]{Brown1982},  that computes the equivariant homology $H^\group_{p+q}(X_{\rankSL}^\ast,\partial X_{\rankSL}^\ast;M)$, namely,
\[ E^2_{p,q}=H_p(\group;H_q(X_{\rankSL}^\ast,\partial X_{\rankSL}^\ast;M))\Rightarrow H^\group_{p+q}(X_{\rankSL}^\ast,\partial X_{\rankSL}^\ast;M).\]
But  by \cref{Stn tensor M,UCT}, the $E^2$-page of this spectral sequence is concentrated in the $q={\rankSL}-1$ row.  In particular, it collapses, and so
\begin{align*}
	\begin{split}
		H^\group_{p+N-1}(X_{\rankSL}^\ast,\partial X_{\rankSL}^\ast;M) &\cong H_{p}(\group;H_{{\rankSL}-1}(X_{\rankSL}^\ast,\partial X_{\rankSL}^\ast;M)) \\
		&\cong H_{p}(\group;\Stn \otimes M) \\
		&\cong H^{ {\rankSL}(\rankSL - 1)/2 - p }(\group; M)
	\end{split}
\end{align*}
where the last isomorphism is Borel--Serre Duality \cite{BorelSerre1973}.

In conclusion, in order to understand the cohomology $H^q(\group; M)$, it is enough to compute $H^\group_{(N+2)(N-1)/2 - q}(X_{\rankSL}^\ast,\partial X_{\rankSL}^\ast;M)$.
We are in particular interested in the case $q=\rankSL-1$, where the above gives an isomorphism
\begin{equation}
\label{eqn:SpecSeq}
	H^{\rankSL-1}(\group; M) \cong H^\group_{\frac{N(N-1)}{2}}(X_{\rankSL}^\ast,\partial X_{\rankSL}^\ast;M).
\end{equation}
We will compute the right hand side of this equation using an explicit chain complex that we describe in the next subsection.

\section{A chain complex for $\left(X_{\rankSL}^*,\partial X_{\rankSL}^*\right)$}

In this section, we describe a cell structure on $(X_\rankSL^*,\partial X_\rankSL^*)$ due to Voronoi \cite{Voronoi1908}. This cell complex (or its quotient under the action of $\SL_\rankSL(\ZZ)$) is often called the \emph{first Voronoi} or \emph{perfect cone} decomposition of $X_\rankSL^*$. We then give an explicit description of the associated cellular chain complex.
We largely follow \cite{Soule2000} and \cite{ElbazVincent2013}; for further details, see also \cite[Chapter 7]{Martinet2003} and \cite[Sections 2.7--2.10]{McConnell1991}

\subsection{Voronoi tessellation}
\label{Voronoi sec}

Recall from \cref{sec_bordification} that $K_\rankSL$ and $K_\rankSL^*$ denote the cones of positive definite forms and positive semi-definite forms with rational kernel, respectively, and that $X_\rankSL$ and $X_\rankSL^*$ are the images of these cones under the map $\pi$ that quotients out homotheties.
For a positive definite form $q\in K_\rankSL$, let $\mu(q) = \min_{w\in \ZZ^\rankSL \setminus \{0\}}q(w)$ be the smallest value that $q$ takes on non-trivial elements of $\ZZ^\rankSL$. The set of \emph{minimal vectors} $m(q)$ is the (finite) subset of $\ZZ^\rankSL$ where this miminum is attained,
\begin{equation*}
	m(q) = \ls v \in \ZZ^\rankSL \ms q(v) = \mu(q) \rs.
\end{equation*}
The form $q$ is called \emph{perfect} if $m(q)$ determines it uniquely up to homothety, so if $q'\in K_\rankSL$ with $m(q') = m(q)$, then $\pi(q) = \pi(q')\in X_\rankSL$.

To each perfect form $q\in K_\rankSL$ we can associate a subset $\sigma(q)\subseteq X_\rankSL^*$ defined as follows: For $v \in \ZZ^{\rankSL} \setminus \{0 \}$, let $\hat{v}\in K_\rankSL^*$ be the positive semidefinite quadratic form defined by the matrix $vv^t$.  
The convex hull of all $\hat{v}$ with $v\in m(q)$ is a subset of $K_\rankSL^*$. Define $\sigma(q)$ as the image of this convex hull in $X_\rankSL^*$,
\begin{equation*}
	 \sigma(q) \coloneqq \pi\left(\operatorname{hull}(\ls \hat{v} \,\middle|\, v\in m(q) \rs)\right).
\end{equation*}
Voronoi \cite{Voronoi1908} showed that the collection of the sets $\sigma(q)$, where $q$ ranges over all perfect forms in $K_\rankSL$, together with all their intersections 
forms a polyhedral cell decomposition of $X_\rankSL^*$  \cite[Proposition 7.1.8]{Martinet2003}.

Any face $\tau$ of this polyhedral complex is contained in a maximal dimensional cell $\sigma(q)$. Define  $m(\tau)\subseteq m(q)$ to be the set of all minimal vectors $v\in m(q)$ such that $\pi(\hat{v})\in \tau$ (in particular, if $\tau = \sigma(q)$ has maximal dimension, then $m(\sigma(q)) = m(q)$). The set  $m(\tau)$ is uniquely determined by $\tau$ and independent of its embedding in a maximal cell $\tau\subseteq \sigma(q)$ \cite[Theorem 2.10(a)]{McConnell1991}. The cell $\tau$ is the convex hull of $m(\tau)$, and for cells $\tau$ and $\tau'$ we have $m(\tau \cap \tau') = m(\tau) \cap m(\tau')$.

The action of $\SL_\rankSL(\ZZ)$ on $X_\rankSL^*$ is cellular with respect to this cell structure and for $g\in \SL_\rankSL(\ZZ)$ and a cell $\sigma$, we have $m(\sigma.g) = m(\sigma).g = \ls gv \ms v\in m(\sigma) \rs$.
There are only finitely many $\SL_\rankSL(\ZZ)$-orbits of cells \cite[p.~110]{Voronoi1908}, cf.~\cite[Theorem 2.10(c)]{McConnell1991}, and if
a cell $\sigma$ intersects $X_\rankSL$ non-trivially (so it is not contained in $\partial X_\rankSL^*$), then the setwise stabiliser $\Stab_{\SL_\rankSL(\ZZ)}(\sigma)$ is finite.

Note that for $\rankSL\geq 2$, all vertices of this polyhedral complex are contained in $\partial X_\rankSL^*$ (they correspond to the forms $vv^t$, which cannot be positive definite as they have non-trivial kernels).
Furthermore, if the interior of a cell $\tau$ contains any point of $\partial X_\rankSL^*$, then $\tau$ is already entirely contained in $\partial X_\rankSL^*$. In particular, the cell decomposition is such that $\partial X_\rankSL^*$ is a subcomplex (i.e.~a union of closed cells).
This allows one to compute $H^\group_{p}(X_{\rankSL}^\ast,\partial X_{\rankSL}^\ast;M)$ using the cellular chain complex of the pair $(X_\rankSL^*, \partial X_\rankSL^*)$. We describe this chain complex in the next section.

\begin{example}
If $\rankSL = 2$, the symmetric space $X_2$ is the hyperbolic plane and $X_2^*$ is obtained from it by adding a countable set of boundary points. The polyhedral complex described above has dimension 2 and is simplicial. There is exactly one $\SL_2(\ZZ)$-orbit of cells in each dimension 0,1 and 2. The 2- and 1-cells intersect $X_2$ non-trivially, whereas the 0-cells are contained in $\partial X_2^*$.
The orbit of the 2-cells is represented by the perfect form 
\begin{equation*}
q = \begin{pmatrix}
2 & -1 \\
-1 & 2
\end{pmatrix}
 \text{ with } m(q) = \ls \pm e_1, \pm e_2, \pm (e_1+e_2) \rs,
\end{equation*}
where $e_1, e_2\in \ZZ^2$ are the two standard basis vectors.
The orbit of 1-cells is represented by the cell $\sigma$ with $m(\sigma) = \ls \pm e_1, \pm e_2 \rs$, and the orbit of the 0-cells by $\tau$ with $m(\tau) = \ls \pm e_1\rs$.
This tessellates $X_2^*$ by the Farey graph with all vertices lying at $\partial X_2^*$.

The case $N=3$ is described in \cite[A.3.7]{Stein2007}.
\end{example}

\iffalse
\section{A spectral sequence}
A \emph{$\group$-complex} is a CW-complex on which $G$-acts cellularly \dawid{we allow orientations to be reversed. Is this a problem here?}. For any $\group$-module $M$ and pair of $\group$-complexes $(X,Y)$ there is a spectral sequence given by taking the union of $Y$ with a filtration of the skeleton of $X$, see \cite[Section 2.1]{Soule2000}.  Specifically we have,
\[E^1_{p,q}=H_q^\group(X^{(p)},X^{(p-1)}\cup Y;M)\Rightarrow H_q^\group(X,Y;M), \]
where $X^{(p)}$ denotes the $p$-skeleton of $X$.
By \cite[VII (8.1)]{Brown1982} (see also \cite[Proposition~2]{Soule2000}) there are natural isomorphisms
\begin{equation}\label{eqn SS1}
 E^1_{p,q}=\bigoplus_{\sigma\in\orbitrep_p} H_q(\group_\sigma;M_\sigma)
\end{equation}
where $\orbitrep_p$ is a set of representative orbits of $p$-cells of $X^{(p)}-(X^{(p-1)}\cup Y)$\benjamin{There seems to be redundancy here. Isn't it just "... $p$-cells of $X- Y$"?} and $M_\sigma=\Z_\sigma\otimes M$, where $\Z_\sigma$ denotes the abelian group $\Z$ on which $G$ acts by the orientation character.  The differential on the $E^1$-page is described in \cite[VII (8.1)]{Brown1982}.

\iffalse
There is a second spectral sequence (see \cite[VII (7.1)]{Brown1982}) given by
\begin{equation}\label{eqn SS2}
    E^2_{p,q}=H_p(\group;H_q(X,Y;M))\Rightarrow H^\group_{p+q}(X,Y;M).
\end{equation}

In addition to the spectral sequences above, we will also need the following lemma; it is certainly known to experts, however, we could not find a proof in the literature.
\fi

\fi

\subsection{The cellular chain complex of $\left(X_{\rankSL}^*,\partial X_{\rankSL}^*\right)$}\label{section:computing_cohomology}
\iffalse Let $1\leq \homdeg\leq d({\rankSL})$, where $d({\rankSL})$ \dawid{$=$... let's give the number} is the dimension of the complex $\left(X_{\rankSL}^*,\partial X_{\rankSL}^*\right)$, be given. The $\homdeg$-th reduced homology $H_\homdeg\left(X_{\rankSL}^*,\partial X_{\rankSL}^*\right)$ with coefficients given by a representation $\pi \colon \group\rightarrow\mathcal{B}(\mathcal{H})$ on a Hilbert space $\mathcal{H}$ can be computed in the following way. \dawid{I don't like the fact that the coefficients are not present in the notation} REFERENCE
\dawid{we should say here that our Hilbert spaces will be finite dimensional, and hence there will be no need for taking closures}
\fi
\label{sec_chain_complex_Voronoi}

We will now describe the cellular chain complex $V_\bullet$ of $(X_\rankSL^*, \partial X_\rankSL^*)$, as a chain complex of projective $\QQ\SL_\rankSL(\ZZ)$-modules.
The discussion is actually more general: consider $(X, \partial X)$, a $\group$-CW-pair of regular CW-complexes for some group $\group$, such that there are only  with finitely many orbits of open cells that are not contained in $\partial X$ and such that the stabiliser of each such cell is finite.

Given an $\homdeg$-cell $\sigma$ and an $(\homdeg -1)$-cell $\tau$, we say that $\tau$ is a \emph{facet} of $\sigma$ if and only if the attaching map $\partial \sigma \to X^{(\homdeg-1)}$ and every map homotopic to it has image intersecting the interior of $\tau$ non-trivially. When $(X, \partial X)$ is a polyhedral pair, this coincides with the usual notion of a facet.

Our description of the modules is an explicit version of the argument given in \cite[Example III.5.5b]{Brown1982}, but carried out over the rationals. Concretely, the $\QQ G$-modules $V_\sigma$ that we are about to describe are isomorphic to the modules $\mathrm{Ind}_{\group(\sigma)}^\group \Z_\sigma \otimes \QQ$ in Brown's notation. 

\iffalse
 which agrees with the $E^1$-page of the spectral sequence \eqref{eqn SS1} applied to a subspace of $X_N^\ast$ with trivial coefficients $\QQ$. Our choice of basis will be slightly different than usual, involving some rescaling.  Later this will allow us to upgrade $V_\bullet$ to a chain complex of free $\QQ\group$-modules. 
\fi

For each cell $\sigma$ of $X$, fix an orientation of $\sigma$.
If $\tau$ and $\sigma$ are cells of $X$ that are not contained in $\partial X$, we denote by $\Stabcoset(\tau, \sigma) \subset \group$ the (finite) set of all $g\in \group$ such that $\tau. g$ is a (not necessarily proper) face of $\sigma$, ignoring the orientations. Note that if $\Stabcoset(\tau, \sigma)$ is non-empty, then it is a double coset of the stabilisers $\Stabcoset(\tau) \coloneqq \Stab_{\group}(\tau)\leqslant \group$ on the left and $\Stabcoset(\sigma)$ on the right, where again the orientation is ignored.
Furthermore, if $g \in \Stabcoset(\tau,\sigma)$ and $\tau.g$ is either equal to $\sigma$ or a facet of $\sigma$, we define $\eta(\tau, \sigma, g)\in \{\pm 1\}$ to be $1$ if $g$ sends the fixed orientation of $\tau$ to the orientation on $\tau.g$ induced from that of $\sigma$, and to be $-1$ otherwise. Clearly, for $h \in \Stabcoset(\sigma)$ we have
\[
\eta(\tau,\sigma,g)\eta(\sigma,\sigma,h) = \eta(\tau,\sigma,gh) \ \textrm{ and } \ \eta(\sigma,\sigma,h) = \eta(\sigma,\sigma,h^{-1}),
\]
and similarly for $(h,g) \in \Stabcoset(\tau) \times \Stabcoset(\tau,\sigma)$ and $(h,g) \in \Stabcoset(\sigma) \times \Stabcoset(\tau,\sigma)$.

Let $\orbitrep_\homdeg$ be a set of representatives of the $\group$-orbits of unoriented $\homdeg$-cells of $X$ that are not contained in $\partial X$. For $\sigma\in \orbitrep_\homdeg$, we put
\begin{equation*}
	V_{\sigma} = \left\{ \sum_{g \in \Stabcoset(\sigma)} \eta(\sigma, \sigma, g) g \xi \,\middle|\, \xi \in \QQ \group \right\}.
\end{equation*}
One easily sees that this is isomorphic to Brown's $\mathrm{Ind}_{\group(\sigma)}^\group \Z_\sigma \otimes \QQ$  via the map taking a generator of $\Z_\sigma$ to $v_\sigma = \frac 1 {|\group(\sigma)|} \sum_{g \in \Stabcoset(\sigma)} \eta(\sigma, \sigma, g) g$. In other words, the element $v_\sigma$ represents the $\homdeg$-chain with weight one on $\sigma$, and zero elsewhere -- we will refer to this as the \emph{characteristic chain} of $\sigma$. 
In particular, for $h \in \Stabcoset(\sigma)$ we have $hv_\sigma = \eta(\sigma,\sigma,h)v_\sigma = v_\sigma h$.

We let $V_\homdeg=\bigoplus_{\sigma\in \orbitrep_\homdeg}V_{\sigma}$
and define the boundary operators $\partial_\homdeg \colon V_\homdeg\rightarrow V_{\homdeg-1}$ as follows.
Let $\sigma\in \orbitrep_\homdeg$ and $\tau \in \orbitrep_{\homdeg-1}$.
We will define the map $\partial_{\sigma,\tau} \colon V_\sigma \to V_\tau$, and then set 
\[\partial_\homdeg = \bigoplus_{\sigma \in \orbitrep_{\homdeg}} \sum_{\tau \in \orbitrep_{\homdeg-1}}\partial_{\sigma,\tau} \colon \bigoplus_{\sigma \in \orbitrep_{\homdeg}} V_\sigma \to \bigoplus_{\tau \in \orbitrep_{\homdeg-1}} V_\tau.\]

Define $\partial_{\sigma,\tau}\colon V_{\sigma} \to V_{\tau}$ to be the map given by left multiplication with  
\begin{equation*}
 \frac {1} {|\Stabcoset(\tau)|}   \sum_{g \in \Stabcoset(\tau,\sigma)} \eta(\tau, \sigma, g) g.
\end{equation*}
Here, $\eta(\tau, \sigma, g)$ is computed with respect to the fixed orientations on $\sigma$ and $\tau$. 
When $\tau.G$ does not contain faces of $\sigma$, we set $\partial_{\sigma,\tau}$ to be the zero map.

Let us compute $\partial_{\sigma, \tau} v_\sigma$ for $\sigma$ and $\tau$ as above. Picking orbit representatives, we write 
$\Stabcoset(\tau, \sigma) = \bigsqcup_{i=1}^l \Stabcoset(\tau) g_i$ for some collection of elements $g_1, \dots, g_l$ of $\Stabcoset(\tau,\sigma)$. In this notation, $\sigma$ has exactly $l$ facets in the orbit of $\tau$, namely $\tau.g_1, \ldots, \tau.g_l$. Now,
\begin{align*}
 \partial_{\sigma,\tau} v_\sigma =&  \frac {1} {|\Stabcoset(\tau)|}   \sum_{g \in \Stabcoset(\tau,\sigma)} \eta(\tau, \sigma, g) g v_\sigma \\
 =&  \frac {1} {|\Stabcoset(\tau)|}   \sum_{i=1}^l \sum_{g \in \Stabcoset(\tau)} \eta(\tau, \sigma, gg_i) gg_i v_\sigma \\
 =&  \frac {1} {|\Stabcoset(\tau)|}   \sum_{i=1}^l \sum_{g \in \Stabcoset(\tau)} \eta(\tau, \tau, g) \eta(\tau, \sigma, g_i) gg_i v_\sigma \\
  =&    \sum_{i=1}^l \eta(\tau, \sigma, g_i) v_\tau g_i v_\sigma \\
   =&    \frac {1} {|\Stabcoset(\sigma)|} \sum_{i=1}^l \sum_{h \in \Stabcoset(\sigma)} \eta(\tau, \sigma, g_i)  v_\tau g_i \eta(\sigma,\sigma,h)h \\
      =&    \frac {1} {|\Stabcoset(\sigma)|} \sum_{i=1}^l \sum_{h \in \Stabcoset(\sigma)} \eta(\tau, \sigma, g_ih)  v_\tau g_i h.
\end{align*}
For every $i$ and $h\in \Stabcoset(\sigma)$, we have $g_i h\in \Stabcoset(\tau,\sigma)$, so  there exists a unique $j$ such that $g_i h \in \Stabcoset(\tau) {g_j}$. Set $h'\coloneqq g_i h {g_j}^{-1} \in \Stabcoset(\tau)$. We have 
\begin{align*}
\eta(\tau, \sigma, g_ih) v_\tau g_i h &=  \eta(\tau, \sigma, h'g_j) v_\tau h' g_j \\ &= \eta(\tau, \sigma, h' g_j) \eta(\tau, \tau,h') v_\tau g_j \\ &= \eta(\tau, \sigma, g_j)  v_\tau g_j.
\end{align*}
Moreover, the map $(i,h) \mapsto j$ is $|\Stabcoset(\tau)|$-to-one, and hence
\[
\partial_{\sigma,\tau} v_\sigma = \frac {1} {|\Stabcoset(\sigma)|} \sum_{i=1}^l \sum_{h \in \Stabcoset(\sigma)} \eta(\tau, \sigma, g_ih)  v_\tau g_i h = v_\tau \cdot  \sum_{j=1}^l  \eta(\tau, \sigma, g_j)   g_j.
\]

Since $v_\tau$ represents the characteristic chain of the cell $\tau$, the expression above is precisely the sum of the characteristic chains of the $\group$-translates of $\tau$ that are faces of $\sigma$, with signs  depending on orientations.
Therefore the chain complex $V_\bullet = (V_\homdeg, \partial_\homdeg)$ is isomorphic to the chain complex of the CW-pair $(X, \partial X)$.

Let $M$ be any $\QQ G$-module, and 
consider $H_0(\group; V_p \otimes M)$; by definition, this is the abelian group of $\QQ G$-coinvariants of $V_p \otimes M$, but this in turn is easily seen to be precisely $V_p \otimes_{\QQ \group} M$.
Clearly, the differentials in $V_\bullet \otimes M$ descend to those of $V_\bullet \otimes_{\QQ G} M$, and therefore  $V_\bullet \otimes_{\QQ G} M$ coincides with the chain complex with terms $H_0(\group; V_p \otimes M)$
 that appears as the zeroth row (that is, $q=0$) of the first page of the spectral sequence computing $H^\group_{p+q} (X, \partial X; M)$, see \cite[Equation VII.7.6]{Brown1982}. Crucially, the other terms are all zero: by \cite[Equation VII.7.6]{Brown1982} again, they are all equal to direct sums of homologies in degree $q$ of the groups $\group(\sigma)$ for various cells $\sigma$. These groups are all finite, and the rational cohomological dimension of a finite group is zero. Hence, for $q \neq 0$, these terms are all zero, as claimed. We therefore see that $H^\group_{p} (X, \partial X; M)$ coincides with  $H_{p}(V_\bullet \otimes_{\QQ G} M)$ for every $\QQ G$-module $M$. 

The modules $V_n$ constructed above are submodules of free modules. We will now show that they are direct summands.
To this end, we will first decompose $\QQ \group$.
 \begin{lemma}\label{lemma:qg_decomposition}
 	Let $K$ be a finite subgroup of $\group$ and $\eta \colon  K \rightarrow\{\pm 1\}$ be a homomorphism. Put
 	\begin{gather*}
 		V = \left\{  \sum_{k \in K} \eta(k) k \xi \,\middle|\, \xi \in \QQ \group \right\}
 		\\\text{and}\quad
 		W=\left\{\sum_{g\in \group}\lambda(g)g\in\QQ \group\,\middle|\,\sum_{k\in K}\eta(k)\lambda(kg)=0\quad\forall g\in \group\right\}.
 	\end{gather*}
 	Then $\QQ \group=V\oplus W$.
 \end{lemma}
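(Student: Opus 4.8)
The plan is to exhibit an explicit idempotent element of $\QQ\group$ acting on the left that projects onto $V$, and identify its complementary summand with $W$.

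\medskip

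\textbf{Setting up the projector.} First I would consider the element $e = \frac{1}{|K|}\sum_{k\in K}\eta(k)k \in \QQ\group$. Since $\eta$ is a homomorphism to $\{\pm 1\}$, a direct computation shows $k' e = \eta(k')e$ for every $k'\in K$, and hence $e^2 = \frac{1}{|K|}\sum_{k'\in K}\eta(k') k' e = \frac{1}{|K|}\sum_{k'\in K}\eta(k')^2 e = e$; so $e$ is idempotent. Consequently left multiplication by $e$ is a $\QQ\group$-linear (for the right module structure) idempotent endomorphism of $\QQ\group$, giving a direct sum decomposition $\QQ\group = e\QQ\group \oplus (1-e)\QQ\group$ of right $\QQ\group$-modules.

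\medskip

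\textbf{Identifying the two summands.} The second step is to check $V = e\QQ\group$ and $W = (1-e)\QQ\group$. For the first: any element $\sum_{k\in K}\eta(k)k\xi$ equals $|K|\,e\xi \in e\QQ\group$; conversely $e\xi = \frac{1}{|K|}\sum_{k\in K}\eta(k)k\xi \in V$ since $V$ is visibly closed under the right $\QQ\group$-action and contains $\frac{1}{|K|}\xi$-scalings. So $V = e\QQ\group$. For $W$: writing $\alpha = \sum_g\lambda(g)g$, the coefficient of $g$ in $e\alpha$ is $\frac{1}{|K|}\sum_{k\in K}\eta(k^{-1})\lambda(kg) = \frac{1}{|K|}\sum_{k\in K}\eta(k)\lambda(kg)$ (using $\eta(k^{-1}) = \eta(k)$ as $\eta$ has image in $\{\pm1\}$). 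Hence $e\alpha = 0$ if and only if $\sum_{k\in K}\eta(k)\lambda(kg) = 0$ for all $g$, i.e.\ if and only if $\alpha \in W$. Thus $W = \ker(e\cdot) = (1-e)\QQ\group$, and the decomposition $\QQ\group = e\QQ\group\oplus(1-e)\QQ\group = V\oplus W$ follows.

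\medskip

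\textbf{Anticipated obstacle.} There is no serious obstacle here; the only points demanding care are bookkeeping ones: confirming that $\eta(k^{-1}) = \eta(k)$ (immediate since $\eta(k)\in\{\pm1\}$ so $\eta(k)^{-1} = \eta(k)$), and correctly tracking the coefficient of a fixed group element $g$ under left multiplication by $e$ — namely that the coefficient of $g$ in $k\alpha$ is $\lambda(k^{-1}g)$, so reindexing $k\mapsto k^{-1}$ is needed to match the form stated for $W$. One should also note for later use (though not strictly required by the statement) that since $e$ is a central-like idempotent only relative to $K$, the summand $V$ is exactly the induced module $\mathrm{Ind}_K^\group\QQ_\eta$ up to the scaling already fixed in the construction of $V_\sigma$, which is why this lemma yields that the $V_n$ are projective $\QQ\group$-modules.
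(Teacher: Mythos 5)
Your proof is correct and follows essentially the same route as the paper: both hinge on the idempotent $e = v = \frac{1}{|K|}\sum_{k\in K}\eta(k)k$ and the resulting splitting $\QQ\group = e\QQ\group \oplus \ker(e\cdot)$, identifying the two summands with $V$ and $W$. You spell out the coefficient check for $W = \ker(e\cdot)$ that the paper dismisses as immediate, but the idea is identical.
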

 \begin{proof}
 	Consider the $\QQ G$-linear map $\rho \colon \QQ G \to \QQ G$ given by $x \mapsto vx$ where $v=\frac{1}{|K|}\sum_{k\in K}\eta(k)k$. Clearly, the image of $\rho$ is precisely $V$, and since $v^2 = v$, the map $\rho$ is actually a retraction of $\QQ G$ onto $V$. It follows that $\QQ G = \ker \rho \oplus V$, and hence it is enough to show that $W = \ker \rho$, but this is immediate.
 	\iffalse
 	We prove first that $\QQ \group$ is the algebraic sum $V+W$. It suffices to show that each $g\in \group$ decomposes in $\QQ \group$ into a sum $g=v_g+w_g$, where $v_g\in V$ and $w_g\in W$. Indeed, such a decomposition can be obtained by taking $v_g=\frac{1}{|K|}\sum_k\eta(k)kg$ and $w_g=g-v_g$.
 	
 	To obtain a direct sum decomposition of $\QQ \group$, we still have to prove that the intersection $V\cap W$ is trivial. Take $u\in V\cap W$. Since $u$ belongs to $V$, we can express it as follows:
 	\[
 	u=\frac{1}{|K|}\sum_{k\in K}\eta(k)k\sum_{g\in \group}\lambda(g)g=\frac{1}{|K|}\sum_{g\in \group}\underbrace{\left(\sum_{k\in K}\eta(k)\lambda(k^{-1}g)\right)}_{\beta(g)}g.
 	\]
 	As u is in $W$, we have $\sum_k\eta(k)\beta(kg)=0$ for all $g\in G$.
 	That is, 
 	\begin{equation} \label{eqnarray:coeff_ind}
 		\sum_{k\in K} f_g(k)=0, \text{ where }f_g(k)=\sum_{l\in K}\eta(kl)\lambda(l^{-1}kg). 
 	\end{equation}
 	We note that the value $f_g(k)$ does not depend on $k\in K$ -- for every $k_1,k_2\in K$, we have
 	\begin{align*}
 		f_g(k_2)&=\sum_l\eta(k_2l)\lambda(l^{-1}k_2g)=\sum_l\eta(k_2^2k_1^{-1}l)\lambda(l^{-1}k_1g)\\
 		&=\sum_l\eta(k_1l)\lambda(l^{-1}k_1g)=f_g(k_1).
 	\end{align*}
 	Coming back to \ref{eqnarray:coeff_ind},
 	\begin{align*}
 		\sum_k f_g(k)=0&\Leftrightarrow f_g(1)=0\\
 		&\Leftrightarrow\sum_l\eta(l)\lambda(l^{-1}g)=0\Leftrightarrow\beta(g)=0,
 	\end{align*}
 	and therefore $u=0$.\fi
 \end{proof}
 
 It follows
  that \begin{equation}
 	\label{projective}
 	\left(\QQ \group\right)^{\orbitrep_\homdeg} = \bigoplus_{\orbitrep_{\homdeg}} \QQ \group =V_n\oplus W_n
 \end{equation} for
 \begin{gather*}
 	V_n=\bigoplus_{\sigma \in \orbitrep_\homdeg}V_{\sigma}
 	\quad\text{and}\quad
 	W_n=\bigoplus_{\sigma \in \orbitrep_\homdeg}W_{\sigma},
 \end{gather*}
 where
 \[
 W_{\sigma}=\left\{\sum_{g\in \group}\lambda(g)g\in\QQ \group\,\middle|\,\sum_{k \in \Stabcoset(\sigma)} \eta(\sigma, \sigma, k) \lambda(kg)=0\quad\forall g\in \group\right\}.
 \]
 In particular, this implies that $V_\bullet$ is a chain complex of projective $\QQ \group$-modules.

\section{Homology with unitary coefficients}\label{subsection:general_stuff}
\label{sec_homology_unitary_coeffs}
We now compute homology with coefficients being a Hilbert space endowed with a unitary action of $\group$. In this context, one usually considers \emph{reduced} homology, namely kernels of the differentials divided by the closures of the images. The reason is that the resulting abelian groups are then Hilbert spaces themselves. This has practical consequences, for example when one wants to access the von Neumann dimension, like in the theory of $L^2$-homology.
The usual homology is known as the \emph{non-reduced} homology in this context.

Let $\mathcal H$ be a finite-dimensional Hilbert space (either real or complex)  endowed with a linear $\group$-action, where $\group = \SL_{\rankSL}(\Z)$. 
In practice, this is going to be one of our carefully chosen orthogonal  representations. To compute $H^{\group}_{\bullet}(X_{\rankSL}^\ast,\partial X_{\rankSL}^\ast;\mathcal H)$ we need to tensor the chain complex $V_\bullet$ defined in \cref{sec_chain_complex_Voronoi} with $\mathcal H$ over $\mathbb Q G$  and compute the homology of the resulting chain complex
\begin{eqnarray*}
	\ldots\rightarrow V_{\homdeg+1}\otimes_{\QQ G} \mathcal{H}\xrightarrow{\partial_{\homdeg+1}\otimes\id}V_\homdeg\otimes_{\QQ G} \mathcal{H}\xrightarrow{\partial_{\homdeg}\otimes\id}V_{\homdeg-1}\otimes_{\QQ G} \mathcal{H}\rightarrow\ldots .
\end{eqnarray*}

Using the isomorphism 
$\QQ \group\otimes_{\QQ \group}\mathcal{H}\xrightarrow{\cong}\mathcal{H}$, $g\otimes v\mapsto\pi(g)v$ and \cref{projective}, we view the modules  $V_\homdeg \otimes_{\QQ \group} \mathcal H$ as direct summands of Hilbert spaces $  \QQ \group^{\orbitrep_{\homdeg}} \otimes_{\QQ \group} \mathcal H \cong \mathcal H^{\orbitrep_{\homdeg}}$; hence, the modules $V_\homdeg \otimes_{\QQ \group} \mathcal H$ are Hilbert spaces themselves.

Since $\mathcal  H$ is finite dimensional, and the $\QQ G$-modules $V_i$ are direct summands of finitely generated free $\QQ \group$-modules by \cref{projective}, all the linear spaces  appearing as images of the differentials  in the  chain complex are finite dimensional. Hence they are closed, which implies that the reduced and non-reduced homologies of $V_\bullet \otimes_{\QQ \group} \mathcal H$ coincide. This is important: For us it is easier to compute reduced homology as the kernel of a Laplacian, but to use the previous section we need to determine the non-reduced homology.

It follows from the Hodge decomposition  that $H^{\group}_{\homdeg}(X_{\rankSL}^\ast,\partial X_{\rankSL}^\ast;\mathcal H) = \ker \Delta_\homdeg$ where
\[
\Delta_{\homdeg}=\left(\partial_\homdeg^*\partial_\homdeg + \partial_{\homdeg+1}\partial_{\homdeg+1}^*\right)\otimes\id \colon V_\homdeg\otimes_{\QQ \group}\mathcal{H}\rightarrow V_\homdeg\otimes_{\QQ \group}\mathcal{H}
\]
is the Laplacian. This version of the Hodge decomposition has exactly the same proof as the usual $L^2$-version, see for example \cite[Lemma 2.0.2]{Schreve2015}.
\iffalse
\begin{lemma}\label{lemma:hodge_iso}
	Let $\partial_{\homdeg+1}:C_{\homdeg+1}\rightarrow C_{\homdeg}$ and $\partial_{\homdeg}:C_{\homdeg}\rightarrow C_{\homdeg-1}$ be two maps between Hilbert spaces such that $\partial_\homdeg\circ\partial_{\homdeg+1}=0$ and denote by $\Delta=\partial_\homdeg^*\partial_\homdeg+\partial_{\homdeg+1}\partial_{\homdeg+1}^*:C_\homdeg\rightarrow C_\homdeg$ the Laplacian. Then the map
	\begin{gather*}
		\varphi:\Ker\Delta\rightarrow\Ker\partial_\homdeg/\overline{\mathrm{Im}\partial_{\homdeg+1}},
		\quad
		u\mapsto[u]
	\end{gather*}
is an isomorphism of abelian groups.
\end{lemma}
\begin{proof}
	Note that $\Ker\Delta=\Ker\partial_\homdeg\cap\Ker\partial_{\homdeg+1}^*$. Suppose that $\varphi(u)=0$ for some $u\in\Ker\Delta$. This means that $u\in\overline{\mathrm{Im}\partial_{\homdeg+1}}$. On the other hand, $\partial_{\homdeg+1}^*u=0$ which, in connection with $\left(\Ker\partial_{\homdeg+1}^*\right)^{\perp}=\overline{\mathrm{Im}\partial_{\homdeg+1}}$, yields $u=0$. Since $u$ was arbitrary, this means that $\varphi$ is injective.
	
	For the proof of surjectivity, take an arbitrary $u\in\Ker\partial_\homdeg$ and decompose it as $u=v+v^{\perp}$, where $v\in\overline{\mathrm{Im}\partial_{\homdeg+1}}$ and $v^{\perp}\in\left(\overline{\mathrm{Im}\partial_{\homdeg+1}}\right)^{\perp}\cap\Ker\partial_\homdeg=\Ker\partial_{\homdeg+1}^*\cap\Ker\partial_\homdeg=\Ker\Delta$. The equality $[u]=[v^{\perp}]$ concludes then the proof.
\end{proof}
\fi

Instead of computing the kernel of the Laplacian $\Delta_{\homdeg}$, we substitute it with another operator $\Delta_{\homdeg}'\colon \left(\QQ \group\right)^{\orbitrep_\homdeg}\otimes_{\QQ G} \mathcal{H}\rightarrow \left(\QQ \group\right)^{\orbitrep_\homdeg}\otimes_{\QQ G} \mathcal{H}$.
 This is done in such a way that $\ker\Delta_{\homdeg}\cong\ker\Delta_{\homdeg}'$. We then compute $\ker\Delta_{\homdeg}'$.

We define the operator $\Delta_\homdeg'$ to be equal to $\Delta_\homdeg$ on the $V_\homdeg$ component and to be the identity on the $W_\homdeg$ component of \cref{projective}:
\[
\Delta_\homdeg'\coloneqq\left(\left(\partial_\homdeg^*\partial_\homdeg+\partial_{\homdeg+1}\partial_{\homdeg+1}^*\right)\oplus\id_{W_n}\right)\otimes\id_{\mathcal{H}}:(V_n\oplus W_n)\otimes_{\QQ \group}\mathcal{H}\rightarrow(V_n\oplus W_n)\otimes_{\QQ \group}\mathcal{H}.
\]

Since tensoring preserves direct sums, we have
\[
\ker \Delta'_\homdeg = \ker \Delta_\homdeg \oplus \ker (\id \otimes \id \colon W_n \otimes \mathcal{H} \to W_n \otimes \mathcal{H}) \cong \ker \Delta_\homdeg. 
\]

The reason for working with $\Delta_\homdeg'$ instead of $\Delta_\homdeg$ is that we can easily describe $\Delta'_\homdeg$ as a matrix by evaluating the representation $\pi \colon \group\rightarrow\mathcal{B}(\mathcal{H})$ on 
 \[
\left(\partial_\homdeg^*\partial_\homdeg+\partial_{\homdeg+1}\partial_{\homdeg+1}^*\right)\oplus\id_W \colon \left(\QQ \group\right)^{\orbitrep_\homdeg}\rightarrow\left(\QQ \group\right)^{\orbitrep_\homdeg}.
\]
 The key point here is that $\Delta'_\homdeg$ is obtained from a homomorphism of free $\QQ \group$-modules.
The Laplacian $\Delta_\homdeg$ on the other hand is obtained from a map of projective $\QQ G$-modules.

\section{Nontriviality of cohomology}\label{section:cohomology_nontriviality}
In this section we describe finite-dimensional orthogonal representations $\pi_\rankSL \colon \SL_\rankSL(\ZZ)\rightarrow\mathcal{B}(\mathcal{H}_\rankSL)$, $\rankSL=3,4$, all of whose invariant vectors are trivial,  such that the cohomology $H^{\rankSL-1}(\SL_\rankSL(\ZZ),\pi_\rankSL)$ is non-zero. Since the representations are finite dimensional, the cohomology coincides with the reduced cohomology.

The general scheme is as follows. We find, for some prime $p_\rankSL$, an orthogonal (hence unitary) representation $\pi_\rankSL'\colon \SL_\rankSL(\ZZ_{p_\rankSL})\rightarrow\mathcal{B}(\mathcal{H}_\rankSL)$  all of whose invariant vectors are trivial, where $\ZZ_{p_\rankSL} = \ZZ/ {p_\rankSL}\ZZ$. This defines 
\[
\pi_\rankSL\colon \SL_\rankSL(\ZZ)\rightarrow\mathcal{B}(\mathcal{H}_\rankSL)
\]
by precomposing $\pi_\rankSL'$ with the modular map $\SL_\rankSL(\ZZ)\twoheadrightarrow \SL_\rankSL(\ZZ_{p_\rankSL})$. Applying $\pi_\rankSL$ to the operator $\Delta'_\homdeg$ yields a real square matrix, as explained at the end of \cref{sec_homology_unitary_coeffs}. To show that $H^{\rankSL-1}(\SL_\rankSL(\ZZ),\pi_\rankSL)\neq 0$, we have to prove that for $n=N(N-1)/2$, this matrix has non-trivial kernel (see \cref{eqn:SpecSeq}). Moreover, computing the real dimension of this kernel gives precisely the real dimension of the corresponding cohomology. 

Let us define $\pi_3'$ and $\pi_4'$. In both cases, we indicate a subgroup $H_\rankSL$ of $\SL_\rankSL(\ZZ_{p_\rankSL})$, $\rankSL=3,4$, and an orthogonal representation $\pi_\rankSL''$ of $H_\rankSL$  all of whose invariant vectors are trivial. The representation $\pi_\rankSL'$ is the representation induced from $\pi_\rankSL''$, from $H_\rankSL$ to $\SL_\rankSL(\ZZ_{p_\rankSL})$, where $p_3=3$, $p_4=2$. In order to get $\pi_N''$, in turn, we proceed as follows. We indicate a normal subgroup $H$ of $H_N$ and define an orthogonal  representation $\rho$ of the quotient group $H_N/H$  all of whose invariant vectors are trivial, in an explicit way. As in the case of constructing $\pi_N$ from $\pi_N'$, the representation $\pi_N''$ is defined by precomposing $\rho$ with the quotient homomorphism $H_N\twoheadrightarrow H_N/H$. Below, we describe the representation $\rho$ for $N=3$ and $N=4$.
\begin{enumerate}[topsep=5pt, partopsep=5pt, itemsep=0pt, parsep=5pt]
	\item \emph{The case $\rankSL=3$}. We set $H_3$ to be the subgroup of $\SL_3(\ZZ_3)$ generated by the two matrices
	\begin{align*}
		s=\begin{pmatrix}
			0&0&1\\
			0&2&0\\
			1&1&0
		\end{pmatrix}\quad\text{and}\quad t=\begin{pmatrix}
			1&2&0\\
			0&2&0\\
			1&1&2
		\end{pmatrix}.
	\end{align*}
The group 	$H_3$ is isomorphic to $S_3\times S_3$ and we take its index-two subgroup $H\cong C_3\times S_3$ generated by $s$ and the two matrices
	\begin{align*}
		a=\begin{pmatrix}
			0&0&1\\
			0&2&0\\
			1&1&0
		\end{pmatrix}\quad\text{and}\quad b=\begin{pmatrix}
			0&1&2\\
			0&1&0\\
			1&2&2
		\end{pmatrix}.
	\end{align*}
	This allows us to define $\rho:H_N/H\rightarrow\GL_{1}(\RR)$ as follows.
	\begin{align*}
		\rho(hH)=\begin{cases*}
			(1) & if $h\in H$ \\
			(-1)        & if $h\notin H$
		\end{cases*}.
	\end{align*} 
	Suppose that $v\in\RR$ is an invariant vector of $\rho$. Then any $h\in H_N\setminus H$ represents the generator $hH$ of $H_N/H$. Thus, $\rho(hH)v=-v$. Since $v$ is invariant this means that it is the zero vector.
	
	\item \emph{The case $\rankSL=4$}. The group $H_4$ is the subgroup of $\SL_4(\ZZ_2)$ generated by the two matrices
	\begin{align*}
		s=\begin{pmatrix}
			1&0&0&0\\
			0&0&0&1\\
			1&1&0&1\\
			1&0&1&1
		\end{pmatrix}\quad\text{and}\quad t=\begin{pmatrix}
			0&1&1&0\\
			0&1&1&1\\
			1&1&1&1\\
			0&0&1&1
		\end{pmatrix}.
	\end{align*}
	The order of $H_4$ is $576$ and $H_4$ possesses a normal subgroup $H$ generated by the following six matrices:
	\begin{align*}
		a&=\begin{pmatrix}
			1&0&1&1\\
			0&1&1&1\\
			0&0&1&0\\
			0&0&0&1
		\end{pmatrix}\text{,}\quad b=\begin{pmatrix}
			1&1&1&0\\
			1&0&0&0\\
			0&0&1&0\\
			1&0&1&1
		\end{pmatrix}\text{,}\quad c=\begin{pmatrix}
			0&1&1&1\\
			1&0&1&1\\
			0&0&1&0\\
			0&0&0&1
		\end{pmatrix}\\d&=\begin{pmatrix}
			1&0&1&0\\
			0&1&1&0\\
			0&0&1&0\\
			0&0&0&1
		\end{pmatrix}\text{,}\quad e=\begin{pmatrix}
			0&1&0&1\\
			0&1&0&0\\
			0&0&1&0\\
			1&1&0&0
		\end{pmatrix}\text{,}\quad f=\begin{pmatrix}
			1&0&0&0\\
			1&0&1&1\\
			0&0&1&0\\
			1&1&1&0
		\end{pmatrix}.
	\end{align*}
	The quotient $H_4/H$ is isomorphic to $D_6\cong S_3$, the dihedral group of order six, and is generated by the equivalence classes of 
	\begin{align*}
		x=\begin{pmatrix}
			0&1&0&0\\
			0&1&1&1\\
			1&1&1&1\\
			0&0&0&1
		\end{pmatrix}\quad\text{and}\quad y=\begin{pmatrix}
			0&1&0&0\\
			0&0&0&1\\
			1&1&0&1\\
			0&1&1&1
		\end{pmatrix}.
	\end{align*}
	Let us denote by $M(\sigma)$ the permutation matrix of $\sigma \in S_3$, e.g.
	\begin{align*}
		M(\cycle{1,2,3})=\begin{pmatrix}
			0&0&1\\
			1&0&0\\
			0&1&0
		\end{pmatrix}.
	\end{align*}
	We define $\rho:H_N/H\rightarrow\GL_{3}(\RR)$ as follows.
	\begin{align*}
		\rho(hH)=\begin{cases*}
			I_3 & if $hH=H$, \\
			M(\cycle{1,2,3})       & if $hH=xH$,\\
			M(\cycle{3,2,1})       & if $hH=x^2H$,\\
			-M(\cycle{1,2})       & if $hH=yH$,\\
			-M(\cycle{2,3})       & if $hH=yxH$,\\
			-M(\cycle{1,3})       & if $hH=yx^2H$.
		\end{cases*}
	\end{align*} 
Assuming $v=(v_1,v_2,v_3)\in\RR^3$ is an invariant vector of $\rho$, we have $(v_1,v_2,v_3)=\rho(xH)v=(v_3,v_1,v_2)$. This already implies that $v_1=v_2=v_3=t$. On the other hand, $(t,t,t)=\rho(yH)=(-t,-t,t)$ which means $t=0$. Thus, the only invariant vector of $\rho$ is the zero vector.
\end{enumerate}
Denote by $\Xi_3$ the operator $\Delta_3'$ for $\rankSL=3$, as constructed at the end of \cref{subsection:general_stuff}. Similarly, denote by $\Xi_4$ the operator $\Delta'_6$ for $\rankSL=4$.

After performing the necessary computations, we were able to show the main result of this paper.
\begin{thm}\label{theorem:nontrivial_cohom}
	The coranks of the matrices $\pi_3(\Xi_3)$ and $\pi_4(\Xi_4)$ are equal to $4$ and $2$ respectively. Therefore, $H^2(\SL_3(\ZZ),\pi_3)\cong\RR^4$ and $H^3(\SL_4(\ZZ),\pi_4)\cong\RR^2$.
\end{thm}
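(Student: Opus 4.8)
The plan is to reduce the statement to an explicit finite linear-algebra computation and then carry it out. All the conceptual machinery is already in place: by \eqref{eqn:SpecSeq}, for $\homdeg = \rankSL(\rankSL-1)/2$ there is an isomorphism $H^{\rankSL-1}(\group;\calh_\rankSL) \cong H^\group_{\homdeg}(X_\rankSL^*,\partial X_\rankSL^*;\calh_\rankSL)$, and by the Hodge-theoretic discussion of \cref{sec_homology_unitary_coeffs} — using that the $\pi_\rankSL$ are finite-dimensional, so reduced and non-reduced homology agree — the right-hand side equals $\ker\Delta_\homdeg \cong \ker\Delta'_\homdeg = \ker\pi_\rankSL(\Xi_\rankSL)$. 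Since $\calh_\rankSL$ is a real vector space and group cohomology with real coefficients is a real vector space determined up to isomorphism by its dimension, the theorem is equivalent to the two numerical assertions $\dim_\RR\ker\pi_3(\Xi_3) = 4$ and $\dim_\RR\ker\pi_4(\Xi_4) = 2$.

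To produce the matrices I would first compute the perfect-cone (first Voronoi) cell structure on $X_\rankSL^*$ in the relevant range of degrees: a set $\orbitrep_m$ of $\SL_\rankSL(\ZZ)$-orbit representatives of open $m$-cells not contained in $\partial X_\rankSL^*$ for $m \in \{\homdeg-1,\homdeg,\homdeg+1\}$, together with their finite stabilisers $\Stabcoset(\sigma)$, the orientation characters $\eta(\sigma,\sigma,-)$, and for each incident pair $(\tau,\sigma)$ the double coset $\Stabcoset(\tau,\sigma)$ with its signs $\eta(\tau,\sigma,-)$. For $\rankSL=3$ this data is classical; for $\rankSL=4$ one starts from the known list of perfect quadratic forms and runs the Voronoi algorithm, as implemented in the notebooks \cite{HigherTSLN2024}. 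From this I assemble $\partial_\homdeg$ and $\partial_{\homdeg+1}$ as matrices over $\QQ\group$ by the formula in \cref{sec_chain_complex_Voronoi}, form the endomorphism $\Xi_\rankSL = (\partial_\homdeg^*\partial_\homdeg + \partial_{\homdeg+1}\partial_{\homdeg+1}^*)\oplus\id_{W_\homdeg}$ of the free module $(\QQ\group)^{\orbitrep_\homdeg}$ via \eqref{projective}, and finally apply $\pi_\rankSL$ — realised as the representation induced from the explicit finite quotient constructed above, hence given by signed permutation matrices over $\QQ$ — to obtain an honest square matrix with rational entries.

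It then remains to compute the corank of this matrix by exact arithmetic. Floating-point rank is not trustworthy at this size, so I would either run exact Gaussian elimination over $\QQ$, or, to control coefficient blow-up, reduce modulo a large prime $p$ to get $\rank_{\FF_p}\pi_\rankSL(\Xi_\rankSL) \le \rank_\QQ\pi_\rankSL(\Xi_\rankSL)$ and then certify a nonzero minor of size $\rank_{\FF_p}\pi_\rankSL(\Xi_\rankSL)$ over $\QQ$, forcing equality; either route pins down the exact corank, which one checks is $4$ for $\rankSL=3$ and $2$ for $\rankSL=4$. Equivalently, one exhibits an explicit spanning set of the rational kernel of size $4$ (resp.\ $2$) and verifies that the rank accounts for the remaining coordinates. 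The computation is recorded in \cite{HigherTSLN2024}.

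The main obstacle is purely computational and concentrated in the case $\rankSL=4$: the perfect-cone complex for $\SL_4$ around degree $6$ is large, so both generating a correct incidence matrix with globally consistent orientations and performing an exact rank computation at that scale are delicate. A secondary point requiring care is the bookkeeping that makes $\pi_\rankSL(\Xi_\rankSL)$ actually compute $\ker\Delta_\homdeg$ rather than something perturbed by the auxiliary summand $W_\homdeg$; this is guaranteed by the splitting $(\QQ\group)^{\orbitrep_\homdeg} = V_\homdeg \oplus W_\homdeg$ of \eqref{projective} together with the definition of $\Delta'_\homdeg$, but the implementation must respect it. Granting these, the stated coranks yield the claimed cohomology groups.
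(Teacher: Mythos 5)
Your proposal is correct and follows essentially the same route as the paper: reduce via \eqref{eqn:SpecSeq} and the Hodge-theoretic identification to computing $\dim_\RR\ker\pi_\rankSL(\Xi_\rankSL)$, assemble $\Xi_\rankSL$ from the Voronoi chain complex and the explicit induced representation, then determine the corank by exact arithmetic. The paper likewise treats the theorem as a computational fact, reducing everything in Sections 2--5 to the kernel of $\pi_\rankSL(\Delta'_\homdeg)$ and then carrying out the exact integer/rational rank computation in Julia as described in Section 7 and recorded in the Zenodo repository; your modular-prime variant for certifying the rank is a reasonable alternative to the paper's direct use of exact rank routines, and your caution about the $V_\homdeg\oplus W_\homdeg$ bookkeeping matches the care taken in the implementation.
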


\section{Implementation}\label{section:implementation}
In order to get our results, we implemented the necessary procedures in Julia \cite{bezanson2017julia}. They are available at \cite{HigherTSLN2024}. Below we describe them in more detail.
\subsection{Voronoi tessalation and the chain complex $V_\bullet$}

In the following, we describe how we computed the equivariant chain complex $V_\bullet$ of $(X_\rankSL^*, \partial X_\rankSL^*)$ described in \cref{sec_chain_complex_Voronoi}. We again set $G = \SL_\rankSL(\ZZ)$.

\subsubsection{Barycentres}
\label{sec_barycentres}
For implementing computations around the Voronoi cell structure on $X_\rankSL^*$, we use that much information about the cells of this complex can be inferred from knowing their barycentres:
For a cell $\sigma$, let $q(\sigma)\coloneqq \sum_{v\in m(\sigma)} \hat{v}$. Then $\pi(q(\sigma))$ is the barycentre of $\sigma$.
If $\sigma$ and $\sigma'$ are cells of the same dimension, then $g\in G$ sends $\sigma$ to $\sigma'$ if and only if it sends $q(\sigma)$ to $q(\sigma')$, i.e.
\begin{equation*}
	\Stabcoset(\sigma, \sigma') = \ls g \in G \ms \sigma.g = \sigma' \rs = \ls g \in G \ms q(\sigma).g = q(\sigma') \rs.
\end{equation*}
In particular, the setwise stabiliser of a cell $\sigma$ is given by
\begin{equation}
\label{eq_stab_sigma}
	\Stab_{G}(\sigma) = \ls g \in G \ \ms q(\sigma).g = q(\sigma) \rs.
\end{equation}
Furthermore, a cell $\sigma$ intersects $X_\rankSL^*$ non-trivially (so it is not contained in $\partial X_\rankSL^*$) if and only if its barycentre does, which is equivalent to saying that $q(\sigma)$ is positive definite.
Positive definiteness can be effectively checked with a computer, so this allows us to determine which cells lie in $\partial X_\rankSL^*$. And if $q(\sigma)$ and $q(\sigma')$ are positive definite forms, then the set of all $g\in G$ sending $q(\sigma)$ to $q(\sigma')$ can be effectively computed as well. This allows one to determine $\Stabcoset(\sigma, \sigma')$ and $\Stab_{G}(\sigma) = \Stabcoset(\sigma)$.

\subsubsection{Orientations}
\label{orientations}
An orientation of a Voronoi cell $\sigma$ is the same as an orientation of the $\dim(\sigma)$-dimensional vector space $\RR(\sigma)$ of symmetric $\rankSL\times \rankSL$ matrices spanned by the forms $\hat{v}$ with $v\in m(\sigma)$. 
In practice, we determine such an orientation by picking an ordered basis of $\RR(\sigma)$.
If $\tau$ is a facet of $\sigma$ and both have a fixed orientation, we compute the relative orientation $\eta(\tau,\sigma,1)$ as follows: Start with the ordered basis $B$ of $\RR(\tau)$; let $B'$ be the basis of $\RR(\sigma)\supset \RR(\tau)$ obtained by appending to $B$ any vector $\hat{v}$ with $v\in m(\sigma) \setminus m(\tau)$ (the result does not depend on the choice of $\hat{v}$). Then $\eta(\tau, \sigma, 1) = \pm 1$ is the orientation of $B'$ in the oriented vector space $\RR(\sigma)$.

\subsubsection{Computing $V_\bullet$ Step 1: Voronoi cells}
As a first step to compute $V_\bullet$, we compute for each $\homdeg$ a set $\orbitrep_\homdeg$ of representatives of the $\group$-orbits of $\homdeg$-cells in $X_{\rankSL}^\ast$ that intersect $X_{\rankSL}$ non-trivially.
Such cells occur in each dimension $\rankSL-1\leq \homdeg \leq \dim(X_\rankSL) = \rankSL(\rankSL + 1)/2 -1$.
Each representative $\sigma\in \orbitrep_\homdeg$ is saved in terms of $m(\sigma)$, a finite set of vectors in $\ZZ^\rankSL$. We ignore orientations for now.

We start with the top-dimensional cells, $\homdeg = \dim(X_\rankSL)$. These are in 1-to-1 correspondence with the $\group$-orbits of rank-$\rankSL$ perfect forms. 
These orbits have been computed up to $\rankSL = 8$ \cite{DutourSikiric2007}. 
We extract the information from the \texttt{Lattices} database of Nebe and Sloane \cite{NebeSloane}.
(Strictly speaking, these are the $\GL_\rankSL(\ZZ)$-orbits of perfect forms, but for $\rankSL \leqslant 5$, these are the same as the $\SL_\rankSL(\ZZ)$-orbits.)
For each perfect form $q$, we compute its minimal vectors $m(q)$ using GAP \cite{GAP4}.

Now assume that we have computed $\orbitrep_{\homdeg+1}$. We then compute $\orbitrep_{\homdeg}$ as follows:
For every $\sigma\in \orbitrep_{\homdeg+1}$, the convex hull of the forms $\hat{v}$ with $v\in m(\sigma)$ is a polyhedral subset of $K_\rankSL^*$. We compute each $\hat{v} = v v^t $ as a symmetric matrix in $\ZZ^{\rankSL\times \rankSL}$ such that their convex hull is a subset of $\RR^{\rankSL\times \rankSL}$.
We use the \texttt{Polyhedra} package \cite{legat2023polyhedral,Polyhedra} with the exact version of the library \texttt{CDDLib} \cite{CDDLib} in its Julia wrapper \texttt{CDDLib.jl} \cite{CDDLib.jl} to compute all facets of this subset. 
For each such facet, we first determine whether the corresponding cell $\tau$ of $X_\rankSL^*$ intersects $X_\rankSL$ non-trivially. As mentioned in \cref{sec_barycentres}, this is equivalent to $q(\tau)$ being positive definite. We check this using Sylverster's criterion, which allows for exact computations (see also \cref{sec_rigour}).
If  $q(\tau)$ is positive definite, we check whether we already added a representative of the $\group$-orbit of $\tau$ to $\orbitrep_{\homdeg}$ in a previous step. To do so, we check whether $q(\tau)$ lies in the orbit of $q(\tau')$ for some $\tau'\in \orbitrep_{\homdeg}$.
This is done using an algorithm of Plesken--Souvignier \cite{Plesken1997}. We used an implementation of this algorithm by Brandt \cite{Brandt} using a combination of Julia and GAP, which we adapted to our purposes.

\subsubsection{Computing $V_\bullet$ Step 2: Differentials}
We next determine for every $\homdeg$ the modules $V_n$ and the differential $\partial_\homdeg \colon V_\homdeg\rightarrow V_{\homdeg-1}$.
To do so, we first fix for all $\homdeg$ and all $\sigma\in \orbitrep_\homdeg$ an orientation by computing an (arbitrary) ordered basis of the vector space $\RR(\sigma)$, as described in \cref{orientations}.

For each $\sigma\in \orbitrep_\homdeg$, we compute its stabiliser $\Stabcoset(\sigma)$ using \cref{eq_stab_sigma}. For every $g\in \Stabcoset(\sigma)$, we also compute $\eta(\sigma,\sigma,g)$ by comparing the fixed orientation of $\sigma$ with the image of this orientation under $g$. This determines the element $v_\sigma$, and hence the summand $V_\sigma\leqslant  V_\homdeg$. The module $V_n$ is the direct sum of modules $V_\sigma$. 

The differentials
 $\partial_\homdeg$ are defined as sums of the $\QQ \group$-morphisms $\partial_{\sigma,\tau}$, for $\sigma\in \orbitrep_\homdeg$ and $\tau\in \orbitrep_{\homdeg-1}$. 
To determine these, we first compute all facets of $\sigma$ using \texttt{Polyhedra.jl}. For each such facet $\tau'$, we determine whether $\tau'$ intersects the interior of $X_\rankSL^*$ by checking whether $q(\tau')$ is positive definite. If this is not the case, we ignore $\tau'$ and continue with the next facet of $\sigma$. If $q(\tau')$ is positive definite, we fix an orientation on $\tau'$.
We use the algorithm by Plesken--Souvignier \cite{Plesken1997,Brandt} to determine the unique $\tau\in \orbitrep_{n-1}$ that lies in the same $G$-orbit as $\tau'$. The algorithm also allows us to get a list of all $g\in \group$ such that $\tau.g = \tau'$. Comparing the orientation on $\tau'$ with the $g$-image of the fixed orientation on $\tau$ gives us $\eta(\tau,\tau',g)$ for all such $g$. We compute $\eta(\tau',\sigma,1)$ by comparing the orientation on $\tau'$ with the fixed orientation on $\sigma$, as explained in \cref{orientations}. We then compute $\eta(\tau,\sigma,g) = \eta(\tau,\tau.g,g) \cdot \eta(\tau.g,\sigma,1)$.
This is all the information that is necessary to determine 
\[
 x_{\tau'}\coloneqq \frac {1} {|\Stabcoset(\tau)|}   \sum_{g \in \Stabcoset(\tau,\tau')} \eta(\tau, \sigma, g) g.
 \]
We obtain $\partial_{\sigma,\tau}\colon V_\sigma\to V_\tau$ as left multiplication with the sum of all $x_{\tau'}$ where $\tau'$ is a facet of $\sigma$.
This sum is an element of the group ring $\QQ \group$ that we store to represent $\partial_{\sigma,\tau}$; we store $\partial_\homdeg$ as a matrix over $\QQ \group$.
We use the group ring implementation from \cite{StarAlgebras} (as used in \cite{KNO2019,KKN2021}) wrapped in the matrix setting in \cite{LowCohomologySOS}.

\subsection{Computing the Laplacians}
In order to compute the operators $\Xi_3$ and $\Xi_4$ as matrices over group rings, we start by computing the Laplacians $\Delta_\homdeg$. 
To work with a group ring, one has to be able to solve the word problem in the group. It turns out that an efficient way of computing with $\RR \group$ is to pre-compute a big enough portion of the Cayley graph of $\group$.

We fix $(\rankSL,\homdeg)$ to be $(3,3)$ or $(4,6)$, see \cref{section:cohomology_nontriviality}. 
We create a subspace $\mathbb{R}(E^{-1}E)$ of $\mathbb{R}\group$ supported on the set $E^{-1}E$, where $E$ consists of the elements of $\group$ appearing  in the support of $\partial_{\sigma,\tau}$ for any cells $\tau$ and $\sigma$ of dimension $\homdeg-1, \homdeg$, or $\homdeg+1$, and of elements of the stabiliser of any cell. The computer verifies that this set is actually big enough, that is, that $E^{-1}E$ contains the supports of all the group elements that appear in our computations. This was not clear a priori.
 
More precisely, $\mathbb{R}(E^{-1}E)$ is the subspace of $\mathbb{R}G$ consisting of the sums $\sum_{g\in E^{-1}E}\lambda_gg$ with twisted  multiplication $(x,y) \mapsto x^*y$ defined on $E$ only. In that way we ensure that the twisted multiplication is an intrinsic operation in $\mathbb{R}(E^{-1}E)$, defined on a subset of $\mathbb{R}(E^{-1}E)$ consisting of group ring elements supported on $E$. We now compute $\Delta_\homdeg$ as a matrix over $\RR(E^{-1}E)$. 

In the next step, we pass from $\Delta_\homdeg$ to $\Delta'_\homdeg$. We store $\Delta_\homdeg$ as an $\orbitrep_{\homdeg}\times\orbitrep_{\homdeg}$ matrix over $\QQ \group$.
We first need to make sure that the matrix only operates on $V_\homdeg$, rather than on $\bigoplus_{\orbitrep_{\homdeg}} \QQ \group$. To arrange this, one can
 multiply the matrix on both sides by a diagonal matrix with entries $v_\sigma$ in the $(\sigma,\sigma)$ position. However, it follows from the computations in \cref{sec_chain_complex_Voronoi} that this multiplication does not change the matrix (this is also verified in the code).
 Finally,  we add the diagonal matrix with entries $1-v_\sigma$ in the $(\sigma,\sigma)$ position to our matrix, to guarantee that we operate as the identity on $W_\homdeg$. This yields the desired matrix $\Delta'_\homdeg$, which is precisely $\Xi_3$ or $\Xi_4$, depending on $(\rankSL,\homdeg)$.
	
These computations constitute the \texttt{sln\_laplacians.jl} script from \cite{HigherTSLN2024}.

\subsection{Proving non-triviality of cohomology}
At this stage, we load the matrix $\Xi_3$ or $\Xi_4$ from the previous step, compute the orthogonal representation of $\SL_N(\ZZ)$ for $N=3$ and $N=4$ as described in \cref{section:cohomology_nontriviality} and apply this representation to the matrix. We compute the corank (or nullity) of the matrices, proving \cref{theorem:nontrivial_cohom}. All this is performed in the  \texttt{sln\_nontrivial\_cohomology.jl} script.
\subsubsection{Computing the representation $\pi_N''$}
In the first step, we compute the representation $\pi_\rankSL''$ with the \texttt{flip\_permutation\_representation} function. This boils down to computing the representation $\rho$ of the quotient $H_N/H$ (cf. \cref{section:cohomology_nontriviality}). To get the subgroups $H_N$, and $H$, we used GAP \cite{GAP4}. We decided, however, to define the generating matrices of these subgroups (obtained via GAP) entirely in Julia, due to simplicity of implementation. In order to check that the aforementioned subgroups are as described, one can use GAP. First, define the generators and the groups $H_N$ and $H$ as follows.
\begin{itemize}[leftmargin=*]
	\item \emph{The case} $N=3$.
	\texttt{\\0\_:=0*Z(3)\^{}0; 1\_:=1*Z(3)\^{}0; 2\_:=2*Z(3)\^{}0;\\
		s:=[[0\_,0\_,1\_],[0\_,2\_,0\_],[1\_,1\_,0\_]];\\
		t:=[[1\_,2\_,0\_],[0\_,2\_,0\_],[1\_,1\_,2\_]];\\
		a:=[[0\_,0\_,1\_],[0\_,2\_,0\_],[1\_,1\_,0\_]];\\
		b:=[[0\_,1\_,2\_],[0\_,1\_,0\_],[1\_,2\_,2\_]];\\	
		H\_N:=Group([s,t]);\\
		H:=Group([s,a,b]);
	}
	\item \emph{The case} $N=4$.
	\texttt{\\0\_:=0*Z(2)\^{}0; 1\_:=1*Z(2)\^{}0;\\
		s:=[[1\_,0\_,0\_,0\_],[0\_,0\_,0\_,1\_],[1\_,1\_,0\_,1\_],[1\_,0\_,1\_,1\_]];\\
		t:=[[0\_,1\_,1\_,0\_],[0\_,1\_,1\_,1\_],[1\_,1\_,1\_,1\_],[0\_,0\_,1\_,1\_]];\\
		a:=[[1\_,0\_,1\_,1\_],[0\_,1\_,1\_,1\_],[0\_,0\_,1\_,0\_],[0\_,0\_,0\_,1\_]];\\
		b:=[[1\_,1\_,1\_,0\_],[1\_,0\_,0\_,0\_],[0\_,0\_,1\_,0\_],[1\_,0\_,1\_,1\_]];\\
		c:=[[0\_,1\_,1\_,1\_],[1\_,0\_,1\_,1\_],[0\_,0\_,1\_,0\_],[0\_,0\_,0\_,1\_]];\\
		d:=[[1\_,0\_,1\_,0\_],[0\_,1\_,1\_,0\_],[0\_,0\_,1\_,0\_],[0\_,0\_,0\_,1\_]];\\
		e:=[[0\_,1\_,0\_,1\_],[0\_,1\_,0\_,0\_],[0\_,0\_,1\_,0\_],[1\_,1\_,0\_,0\_]];\\
		f:=[[1\_,0\_,0\_,0\_],[1\_,0\_,1\_,1\_],[0\_,0\_,1\_,0\_],[1\_,1\_,1\_,0\_]];\\
		H\_N:=Group([s,t]);\\
		H:=Group([a,b,c,d,e,f]);
	}
\end{itemize} 
Next, compute the quotient $H_N/H$ by running \texttt{H\_N\_mod\_H:=H\_N/H;}. When computing the quotient $H_N/H$, GAP automatically verifies that $H$ is a normal subgroup of $H_N$. We can check that the subgroups of interest have the desired structure by calling \texttt{StructureDescription(K)}, where \texttt{K} is one of the groups: \texttt{H\_N}, \texttt{H}, and \texttt{H\_N\_mod\_H}. The only thing left to check, for the case $N=4$, is that the representatives (in $H_N$) of the generators of $H_N/H$ can be chosen to be $x$ and $y$. This can be accomplished as follows.
\texttt{\\x:=[[0\_,1\_,0\_,0\_],[0\_,1\_,1\_,1\_],[1\_,1\_,1\_,1\_],[0\_,0\_,0\_,1\_]];\\
	y:=[[0\_,1\_,0\_,0\_],[0\_,0\_,0\_,1\_],[1\_,1\_,0\_,1\_],[0\_,1\_,1\_,1\_]];\\
	x\^{}3 in H; y\^{}2 in H; y*x*y*x in H;\\
	x in H; x\^{}2 in H; y in H; y*x in H; y*x\^{}2 in H;
}

\subsubsection{Computing the representation $\pi_N'$}
In the next step, we induce $\pi_\rankSL''$ from $H$ to $\SL_\rankSL(\ZZ_{p_N})$ to get $\pi_\rankSL'$. This is done essentially in the \texttt{ind\_H\_to\_G} function, although, for the sake of legibility of the script, we wrapped it in an auxiliary function called \texttt{ind\_rep\_dict} which is invoked directly in the main script.

\subsubsection{Evaluating representations on the Laplacians}
The matrices $\pi_{\rankSL}(\Xi_\rankSL)$ are computed directly from $\pi_\rankSL'$ and $\Xi_\rankSL$ with the \texttt{representing\_matrix} function. For each group ring entry of $\Xi_\rankSL$, we project its supports to $\SL_{\rankSL}(\ZZ_{p_\rankSL})$, apply $\pi_\rankSL'$ to these projections and sum these values with coefficients to get the block entry corresponding to the considered group ring entry of $\Xi_\rankSL$. 

\subsubsection{Checking singularity of Laplacians}
Finally, we compute the corank of the matrices computed in the previous step;  these correspond to $\RR$-dimensions of  $H^2(\SL_3(\ZZ),\pi_3))$ and $H^3(\SL_4(\ZZ),\pi_4)$, respectively.

\subsection{Ensuring rigour of computations}
\label{sec_rigour}
To ensure rigour of the computations, all of them are done over integer or rational types and we use exact determinant and rank functions provided by the \texttt{LinearAlgebraX} package \cite{LinearAlgebraX}. 

\section{Replication of the results}
To replicate the computations justifying \cref{theorem:nontrivial_cohom}, we refer the reader to the \texttt{README.md} file in the Zenodo repository \cite{HigherTSLN2024}. All the replication details are included there as well.

\subsection*{Acknowledgements}
We would like to thank Uri Bader and Roman Sauer for helpful conversations and clarifications regarding the results of \cite{BaderSauer2023}.
BB would like to thank Ric Wade for inviting him to a visit in Oxford where this collaboration started.
He would also like to thank Dan Yasaki for his patient explanations on Voronoi tessellations at the BIRS workshop Cohomology of Arithmetic Groups: Duality, Stability, and Computations (21w5011), organised in 2021 by Jeremy Miller and Jennifer Wilson.

This work has received funding from the European Research Council (ERC) under the European Union's Horizon 2020 research and innovation programme (Grant agreement No. 850930).
It was partially supported by the Deutsche Forschungsgemeinschaft (DFG, German Research Foundation) -- Project-ID 427320536 -- SFB 1442, as well as by Germany’s Excellence Strategy EXC 2044 -- 390685587, Mathematics Münster: Dynamics–Geometry–Structure. The fourth author was supportued by National Science Centre, Poland SONATINA 6 grant \emph{Algebraic spectral gaps in group cohomology} (grant agreement no.
2022/44/C/ST1/00037).

For the purpose of Open Access, the authors have applied a CC BY public copyright licence to any Author Accepted Manuscript (AAM) version arising from this submission.

\bibliographystyle{halpha}
\bibliography{refs.bib}

\newcommand{\etalchar}[1]{$^{#1}$}
\begin{thebibliography}{DCGLT20}

\bibitem[BS23]{BaderSauer2023}
Uri Bader and Roman Sauer.
\newblock Higher {K}azhdan property and unitary cohomology of arithmetic
  groups.
\newblock 2023, \href{https://arxiv.org/abs/2308.06517}{{\ttfamily
  arXiv:2308.06517 [math.RT]}}.

\bibitem[BEKS17]{bezanson2017julia}
Jeff Bezanson, Alan Edelman, Stefan Karpinski, and Viral~B Shah.
\newblock Julia: A fresh approach to numerical computing.
\newblock {\em SIAM review}, 59(1):65--98, 2017.
\newblock \href{https://dx.doi.org/10.1137/141000671}{{\ttfamily
  10.1137/141000671}}.

\bibitem[BS73]{BorelSerre1973}
A.~Borel and J.-P. Serre.
\newblock Corners and arithmetic groups.
\newblock {\em Comment. Math. Helv.}, 48:436--491, 1973.
\newblock \href{https://dx.doi.org/10.1007/BF02566134}{{\ttfamily
  10.1007/BF02566134}}.

\bibitem[Bra20]{Brandt}
Jens Brandt.
\newblock {plesken-souvignier}, 2020.
\newblock
  \url{https://git.rwth-aachen.de/jens.brandt/plesken-souvignier/-/tree/master}.

\bibitem[Bro94]{Brown1982}
Kenneth~S. Brown.
\newblock {\em Cohomology of groups}, volume~87 of {\em Graduate Texts in
  Mathematics}.
\newblock Springer-Verlag, New York, 1994.
\newblock \href{https://dx.doi.org/10.1007/978-1-4684-9327-6}{{\ttfamily
  10.1007/978-1-4684-9327-6}}, Corrected reprint of the 1982 original.

\bibitem[BHKM24]{HigherTSLN2024}
Benjamin Brück, Sam Hughes, Dawid Kielak, and Piotr Mizerka.
\newblock {Replication details for "Non-vanishing unitary cohomology of
  low-rank integral special linear groups'' by B. Brück, S. Hughes, D. Kielak,
  and P. Mizerka}, 2024.
\newblock \url{https://zenodo.org/records/14008647}.

\bibitem[DCGLT20]{DeChiffreetal2020}
Marcus De~{C}hiffre, Lev Glebsky, Alexander Lubotzky, and Andreas Thom.
\newblock Stability, cohomology vanishing, and nonapproximable groups.
\newblock {\em Forum of Mathematics, Sigma}, 8, 2020.
\newblock \href{https://dx.doi.org/10.1017/fms.2020.5}{{\ttfamily
  10.1017/fms.2020.5}}.

\bibitem[Del77]{Delorme1977}
Patrick Delorme.
\newblock {$1$}-cohomologie des repr\'{e}sentations unitaires des groupes de
  {L}ie semi-simples et r\'{e}solubles. {P}roduits tensoriels continus de
  repr\'{e}sentations.
\newblock {\em Bull. Soc. Math. France}, 105(3):281--336, 1977.
\newblock \href{https://dx.doi.org/10.24033/bsmf.1853}{{\ttfamily
  10.24033/bsmf.1853}}.

\bibitem[DSSV07]{DutourSikiric2007}
Mathieu Dutour~Sikiri{\'c}, Achill Sch{\"u}rmann, and Frank Vallentin.
\newblock Classification of eight-dimensional perfect forms.
\newblock {\em Electronic Research Announcements of the American Mathematical
  Society}, 13:21--32, 2007.
\newblock \href{https://dx.doi.org/10.1090/S1079-6762-07-00171-0}{{\ttfamily
  10.1090/S1079-6762-07-00171-0}}.

\bibitem[EVGS13]{ElbazVincent2013}
Philippe Elbaz-Vincent, Herbert Gangl, and Christophe Soul\'{e}.
\newblock Perfect forms, {K}-theory and the cohomology of modular groups.
\newblock {\em Advances in Mathematics}, 245:587--624, 2013.
\newblock \href{https://dx.doi.org/10.1016/j.aim.2013.06.014}{{\ttfamily
  10.1016/j.aim.2013.06.014}}.

\bibitem[F{\etalchar{+}}20]{CDDLib}
Komei Fukuda et~al.
\newblock {CDDLib}, 2020.
\newblock \url{https://github.com/cddlib}.

\bibitem[GAP22]{GAP4}
The GAP~Group.
\newblock {\em {GAP -- Groups, Algorithms, and Programming, Version 4.12.2}},
  2022.

\bibitem[Gui72]{Guichardet1972}
Alain Guichardet.
\newblock Sur la cohomologie des groupes topologiques. {II}.
\newblock {\em Bull. Sci. Math. (2)}, 96:305--332, 1972.

\bibitem[Hat02]{Hatcher2002}
Allen Hatcher.
\newblock {\em Algebraic topology}.
\newblock Cambridge University Press, Cambridge, 2002.

\bibitem[KKN21]{KKN2021}
Marek Kaluba, Dawid Kielak, and Piotr~W. Nowak.
\newblock On property ({T}) for {$\rm Aut(F_n)$} and {$\rm SL_n(\Bbb {Z})$}.
\newblock {\em Ann. of Math. (2)}, 193(2):539--562, 2021.
\newblock \href{https://dx.doi.org/10.4007/annals.2021.193.2.3}{{\ttfamily
  10.4007/annals.2021.193.2.3}}.

\bibitem[KL22]{StarAlgebras}
Marek Kaluba and Beno{\^\i}t Legat.
\newblock {StarAlgebras.jl}, 2022.
\newblock \url{https://github.com/JuliaAlgebra/StarAlgebras.jl/tree/v0.1.7}.

\bibitem[KMN24]{LowCohomologySOS}
Marek Kaluba, Piotr Mizerka, and Piotr~W. Nowak.
\newblock {LowCohomologySOS}, 2024.
\newblock \url{https://github.com/piotrmizerka/LowCohomologySOS}.

\bibitem[KNO19]{KNO2019}
Marek Kaluba, Piotr~W. Nowak, and Narutaka Ozawa.
\newblock {${\rm Aut}(\Bbb F_5)$} has property {$(T)$}.
\newblock {\em Math. Ann.}, 375(3-4):1169--1191, 2019.
\newblock \href{https://dx.doi.org/10.1007/s00208-019-01874-9}{{\ttfamily
  10.1007/s00208-019-01874-9}}.

\bibitem[Ka{\v{z}}67]{Kazhdan1967}
D.~A. Ka{\v{z}}dan.
\newblock On the connection of the dual space of a group with the structure of
  its closed subgroups.
\newblock {\em Funkcional. Anal. i Prilo\v{z}en.}, 1:71--74, 1967.

\bibitem[Leg23]{legat2023polyhedral}
Beno{\^\i}t Legat.
\newblock Polyhedral computation.
\newblock In {\em JuliaCon}, July 2023.

\bibitem[L{\etalchar{+}}24]{Polyhedra}
Beno{\^\i}t Legat et~al.
\newblock {Polyhedra.jl}, 2024.
\newblock \url{https://juliapolyhedra.github.io/Polyhedra.jl/stable/}.

\bibitem[LDF{\etalchar{+}}19]{CDDLib.jl}
Benoît Legat, Robin Deits, Marcelo Forets, Daisuke Oyama, François Pacaud,
  and Elliot Saba.
\newblock Juliapolyhedra/cddlib.jl, 2019.
\newblock \url{https://doi.org/10.5281/zenodo.1214581}.

\bibitem[Mar03]{Martinet2003}
Jacques Martinet.
\newblock {\em Perfect lattices in {E}uclidean spaces}, volume 327 of {\em
  Grundlehren der mathematischen Wissenschaften [Fundamental Principles of
  Mathematical Sciences]}.
\newblock Springer-Verlag, Berlin, 2003.
\newblock \href{https://dx.doi.org/10.1007/978-3-662-05167-2}{{\ttfamily
  10.1007/978-3-662-05167-2}}.

\bibitem[McC91]{McConnell1991}
Mark McConnell.
\newblock Classical projective geometry and arithmetic groups.
\newblock {\em Mathematische Annalen}, 290(3):441--462, 1991.
\newblock \href{https://dx.doi.org/10.1007/BF01459253}{{\ttfamily
  10.1007/BF01459253}}.

\bibitem[Mon10]{Monod2010}
Nicolas Monod.
\newblock On the bounded cohomology of semi-simple groups, {S}-arithmetic
  groups and products.
\newblock {\em Journal f{\"u}r die Reine und Angewandte Mathematik},
  640:167--202, 2010.
\newblock \href{https://dx.doi.org/10.1515/CRELLE.2010.024}{{\ttfamily
  10.1515/CRELLE.2010.024}}.

\bibitem[NS24]{NebeSloane}
Gabriele Nebe and Neil Sloane.
\newblock {LATTICES}, 2024.
\newblock \url{https://www.math.rwth-aachen.de/~Gabriele.Nebe/LATTICES}.

\bibitem[PS97]{Plesken1997}
W.~Plesken and B.~Souvignier.
\newblock Computing isometries of lattices.
\newblock {\em Journal of Symbolic Computation}, 24(3-4):327--334, 1997.
\newblock Computational algebra and number theory (London, 1993),
  \href{https://dx.doi.org/10.1006/jsco.1996.0130}{{\ttfamily
  10.1006/jsco.1996.0130}}.

\bibitem[Sea24]{LinearAlgebraX}
Edward Scheinerman~et. al.
\newblock {LinearAlgebraX.jl}, 2024.
\newblock \url{https://github.com/scheinerman/LinearAlgebraX.jl}.

\bibitem[Sch15]{Schreve2015}
Kevin Schreve.
\newblock {\em The $L^2$-cohomology of {D}iscrete {G}roups}.
\newblock PhD thesis, 2015.
\newblock Copyright - Database copyright ProQuest LLC; ProQuest does not claim
  copyright in the individual underlying works; Last updated - 2023-03-04.

\bibitem[Sou00]{Soule2000}
C.~Soul{\'e}.
\newblock On the 3-torsion in {{$K_4(\mathbb{Z})$}}.
\newblock {\em Topology}, 39(2):259--265, 2000.
\newblock \href{https://dx.doi.org/10.1016/S0040-9383(99)00006-3}{{\ttfamily
  10.1016/S0040-9383(99)00006-3}}.

\bibitem[Ste07]{Stein2007}
William Stein.
\newblock {\em Modular forms, a computational approach}, volume~79 of {\em
  Graduate Studies in Mathematics}.
\newblock American Mathematical Society, Providence, RI, 2007.
\newblock \href{https://dx.doi.org/10.1090/gsm/079}{{\ttfamily
  10.1090/gsm/079}}, With an appendix by Paul E. Gunnells.

\bibitem[Vor08]{Voronoi1908}
G.~Vorono{\"{\i}}.
\newblock Nouvelles applications des param{\`e}tres continus {\`a} la
  th{\'e}orie des formes quadratiques. {I}: {Sur} quelques propri{\'e}t{\'e}s
  des formes quadratiques positives parfaites.
\newblock {\em Journal f{\"u}r die Reine und Angewandte Mathematik},
  133:97--178, 1908.
\newblock \href{https://dx.doi.org/10.1515/crll.1908.133.97}{{\ttfamily
  10.1515/crll.1908.133.97}}.

\end{thebibliography}

\end{document}